\newtheorem{theorem}{Theorem}[section]
\newtheorem{lemma}[theorem]{Lemma}
\newtheorem{proposition}[theorem]{Proposition}
\newtheorem{definition}[theorem]{Definition}
\numberwithin{equation}{section}
\title{Domains of Discontinuity for Almost-Fuchsian Groups}
\author{Andrew Sanders}
\begin{document}

\address{Department of Mathematics, Statistics and Computer Science, University of Illinois at Chicago, Chicago, IL 60607 USA}
\email{andysan@uic.edu}
\thanks{Sanders gratefully acknowledges partial support from the National Science Foundation Postdoctoral Research Fellowship}

\keywords{Minimal surfaces, quasi-Fuchsian groups, hyperbolic 3-manifolds, hyperbolic Gauss map, quasiconformal maps. }

\date{October 23,2013}

\subjclass[2010]{Primary: 53A10 (Minimal surfaces), 30F40 (Kleinian groups); Secondary: 37F30 (Quasiconformal methods and Teichmuller theory; Fuchsian and Kleinian groups as dynamical systems).}

\begin{abstract}
An almost-Fuchsian group $\Gamma<\mathrm{Isom}^{+}(\mathbb{H}^3)$ is a 
quasi-Fuchsi\-an group such that the quotient hyperbolic manifold $\mathbb{H}^3/\Gamma$ contains 
a closed incompressible minimal surface with principal curvatures contained in $(-1,1).$
We show that the domain of discontinuity of an almost-Fuchsian group contains many balls 
of a fixed spherical radius $R>0$ in $\mathbb{C}\cup \{\infty\} =\partial_{\infty}(\mathbb{H}^3).$ 
This yields a necessary condition for a quasi-Fuchsian group to be almost-Fuchsian which involves only
conformal geometry.  As an application, we prove that there are no doubly-degenerate geometric limits 
of almost-Fuchsian groups.   
\end{abstract}

\maketitle

\section{Introduction}

The systematic study of closed minimal surfaces in hyperbolic 3-manifolds began with the work of Uhlenbeck in the early 1980's \cite{UHL83}.  There, she identified a class of quasi-Fuchsian hyperbolic 3-manifolds, the almost-Fuchsian manifolds, which contain a unique closed, incompressible minimal surface which has principal curvatures in $(-1,1).$  The structure of almost-Fuchsian manifolds has been studied considerably by a number of authors \cite{GHW10}, \cite{HW13}.  In particular, the invariants arising from quasi-conformal Kleinian group theory (e.g. Hausdorff dimension of limit sets, distance between conformal boundary components) are controlled by the principal curvatures of the unique minimal surface.

Given an almost-Fuchsian manifold $M,$ this paper further explores the relationship between the geometry of the unique minimal surface and the conformal surface at infinity.  As a result we will show that there are no doubly-degenerate geometric limits of almost-Fuchsian groups.  This will be achieved through a careful study of the hyperbolic Gauss map from the minimal surface, which serves to communicate information from the minimal surface to the conformal surface at infinity.

We briefly summarize the strategy: if $\Gamma$ is the holonomy group of an almost-Fuchsian manifold $M=\mathbb{H}^3/\Gamma,$ consider the $\Gamma$-invariant minimal disk $\widetilde{\Sigma}\subset\mathbb{H}^3$ which projects to the unique closed, minimal surface in $M.$  We locate disks $D_{i}\subset \widetilde{\Sigma}$ which are very close to being totally geodesic; the hyperbolic Gauss map from these disks is nicely behaved and in particular satisfies a Koebe-type theorem ~\cite{AG85}, namely its image contains disks of bounded radius.  Epstein ~\cite{EPS86} has studied the hyperbolic Gauss map extensively.  In particular, we apply his work to show that the images of the $D_i$ under the hyperbolic Gauss map are contained in the domain of discontinuity for $\Gamma.$  These images form barriers for the limit set of $\Gamma.$  Since the limit set of a doubly-degenerate group is equal to $\partial_{\infty}(\mathbb{H}^3),$ these barriers form an obstruction to $\Gamma$ approaching a doubly-degenerate group.

 If $\Sigma$ is a surface immersed in some hyperbolic 3-manifold $M,$  the immersion induces a conformal structure $\sigma$ on $\Sigma$ which underlies the induced Riemannian metric $g.$  Provided the immersion is minimal, Hopf ~\cite{HOP54} showed that the second fundamental form \[B=B_{11}dx^2 +2B_{12}dxdy+B_{22}dy^2\]
appears as the real part of
\[\alpha=(B_{11}-iB_{12})\ dz^2,\]
which is a holomorphic quadratic differential on $(\Sigma, \sigma).$  The norm $\lVert \alpha \rVert_{g}$ measures how much $\Sigma$ bends inside of $M.$  In $\S$\ref{subsec: holquad}, we prove a Harnack inequality for $\lVert \alpha \rVert_{g}$ satisfying some bound $\lVert \alpha \rVert_{g} \leq K.$  First we show that the norm of a bounded holomorphic quadratic differential on the hyperbolic plane satisfies a Harnack inequality.  Then, we prove the induced metric $g$ is uniformly comparable to the hyperbolic metric in that conformal class.  Therefore, the growth of the principal curvatures of a minimal immersion is bounded; the surface cannot bend too much, too quickly.  The disks $D_i$ mentioned in the previous paragraph are obtained by taking balls around the zeros of $\alpha;$ the Harnack inequality ensures we may pick balls of a uniform radius.

In $\S$\ref{sub: thickregions}, we begin the study of the hyperbolic Gauss map: given an oriented surface $\widetilde{\Sigma}\subset\mathbb{H}^3$ in hyperbolic 3-space with oriented unit normal field $N,$ the pair of hyperbolic Gauss maps $\mathcal{G}^{\pm}:\widetilde{\Sigma}\rightarrow \partial_{\infty}(\mathbb{H}^3)$ are defined by recording the endpoint of the geodesic ray in the direction of $\pm N.$  We show that the images of the disks $D_i\subset \widetilde{\Sigma}$ under the hyperbolic Gauss map contain disks of a bounded radius in $\partial_{\infty}(\mathbb{H}^3).$  To achieve this, we utilize the generalization of the K\"{o}ebe $\frac{1}{4}$-theorem to quasiconformal maps due to Gehring and Astala \cite{AG85}.  

In $\S$\ref{sec: main} the above technical results are used to prove that the domain of discontinuity of an almost-Fuchsian group $\Gamma$ contains a disk (in fact many) of fixed radius in $\mathbb{C}.$  As we mentioned above, this is an application of work of Epstein who showed ~\cite{EPS86} that the hyperbolic Gauss map from the minimal $\Gamma$-invariant disk $\widetilde{\Sigma}$ is a (quasiconformal) diffeomorphism onto  the domain of discontinuity for $\Gamma.$  In particular, we obtain definite regions $R_i$ of $\partial_{\infty}(\mathbb{H}^3)$ into which the limit set of $\Gamma$ can not penetrate.  
Lastly in $\S$\ref{subsec: limits} we prove the promised theorem:
\begin{theorem} \label{thm: ddlimits}
There are no doubly-degenerate geometric limits of almost-Fuchsian groups.
\end{theorem}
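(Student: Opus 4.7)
The plan is a short contradiction argument that leverages the main technical theorem of $\S$\ref{sec: main}. Assume for contradiction that almost-Fuchsian groups $\Gamma_n$ converge geometrically to a doubly-degenerate Kleinian group $\Gamma_\infty$. The theorem of $\S$\ref{sec: main} furnishes, for each $n$, a closed spherical disk $B_n \subset \Omega(\Gamma_n)$ of radius at least $R > 0$, where $R$ is the uniform constant provided by that theorem. Since the space of closed spherical disks of radius $R$ in $\partial_\infty(\mathbb{H}^3) \cong S^2$ is compact in the Hausdorff topology, I would pass to a subsequence so that $B_n \to B_\infty$, a closed spherical disk of the same radius.

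Next I would invoke the standard lower semicontinuity of limit sets under geometric convergence: every $\gamma \in \Gamma_\infty$ is a limit of elements $\gamma_n \in \Gamma_n$, and once $\gamma$ is loxodromic the fixed points of $\gamma_n$ converge to those of $\gamma$; since loxodromic fixed points are dense in the limit set, this gives $\Lambda(\Gamma_\infty) \subseteq \liminf_n \Lambda(\Gamma_n)$ in the Hausdorff sense on $\partial_\infty(\mathbb{H}^3)$. Because $\Gamma_\infty$ is doubly degenerate, $\Lambda(\Gamma_\infty) = \partial_\infty(\mathbb{H}^3)$. Choose any point $p$ in the interior of $B_\infty$. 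Then $p \in \Lambda(\Gamma_\infty)$, so there exist $p_n \in \Lambda(\Gamma_n)$ with $p_n \to p$. For all sufficiently large $n$, the Hausdorff convergence $B_n \to B_\infty$ places $p_n$ in the interior of $B_n$, hence in $\Omega(\Gamma_n)$, contradicting $p_n \in \Lambda(\Gamma_n)$.

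The main obstacle is not this limit argument itself, which is essentially a compactness observation, but the uniformity of the radius $R$ along the sequence $\{\Gamma_n\}$. The almost-Fuchsian hypothesis only gives principal curvatures strictly in $(-1,1)$ for each individual $\Gamma_n$; \emph{a priori} those bounds could deteriorate toward $1$ as $n \to \infty$, in which case the Harnack-based selection of balls around zeros of $\alpha$, and the subsequent Koebe-type estimate for the hyperbolic Gauss map, might produce $R_n \to 0$. The content of $\S$\ref{subsec: holquad}--$\S$\ref{sec: main} — the Harnack inequality for $\lVert \alpha \rVert_g$, the comparability of $g$ with the hyperbolic metric in its conformal class, and the Astala--Gehring quasiconformal Koebe theorem applied to the hyperbolic Gauss map — must be packaged to yield a radius $R$ depending only on the almost-Fuchsian hypothesis itself. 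Granting that uniformity, the theorem is immediate from the two paragraphs above; a minor bookkeeping issue, standard but worth flagging, is the choice of basepoints used to normalize geometric convergence so that the disks $B_n$ are seen in a common copy of $\partial_\infty(\mathbb{H}^3)$.
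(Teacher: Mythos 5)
Your limiting argument in the first two paragraphs is fine and matches the easy half of the paper's proof (the paper likewise uses $\Lambda(\Gamma)\subset\lim_n\Lambda(\Gamma_n)$ under geometric convergence). The genuine gap is in your very first step: Theorem \ref{thm:thickDD} does \emph{not} furnish a spherical disk of uniform radius $R$ inside $\Omega(\Gamma_n)$ for the given groups $\Gamma_n$. It applies only to \emph{normalized} almost-Fuchsian groups and produces the Euclidean disk $B_{\mathbb{C}}(0,R)$ about the origin. To use it you must conjugate each $\Gamma_n$ by a normalizing isometry $I_n$; what you then know is $B_{\mathbb{C}}(0,R)\subset\Omega(I_n\Gamma_n I_n^{-1})$, i.e.\ $I_n^{-1}\bigl(B_{\mathbb{C}}(0,R)\bigr)\subset\Omega(\Gamma_n)$. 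If the $I_n$ leave every compact subset of $\mathrm{Isom}(\mathbb{H}^3)$ — which is entirely possible a priori — these pulled-back disks can shrink to points in the spherical metric, because $\mathrm{Isom}(\mathbb{H}^3)$ does not act by isometries on $\partial_\infty(\mathbb{H}^3)$. So the disks $B_n$ you extract need not have spherical radius bounded below, and your compactness argument collapses.

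You flag ``uniformity of $R$'' as the obstacle, but you misdiagnose its source. It is not that the principal curvature bounds might deteriorate toward $1$: Proposition \ref{prop:bdQD} only needs the bound $\sup\lVert\alpha\rVert\leq C$ with $C=1$, which the almost-Fuchsian condition supplies for every member of the class, so the radius $R$ of Theorem \ref{thm:thickDD} is already uniform over all \emph{normalized} almost-Fuchsian disks. The real issue is the escape of the normalizing conjugations, and resolving it is the bulk of the paper's proof: assuming no normalizing sequence stays bounded, one chooses zeros $p_n$ of $\alpha_n$ on $\widetilde{\Sigma_n}$ converging to a point of $\partial_\infty(\mathbb{H}^3)$, conjugates by uniformly bounded parabolic and elliptic elements so that the remaining normalization is a pure dilation $\lambda_n\to\infty$, and then uses Theorem \ref{thm:flats} (the refinement placing disks around the Gauss images of \emph{all} zeros in a compact set, with a radius controlled by a word-length bound on the normalizing isometries) to conclude that $\Lambda(\Gamma_n)$ would be trapped in a small spherical ball about $0$ — contradicting $\Lambda(\Gamma_n)\to\partial_\infty(\mathbb{H}^3)$. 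Without an argument of this kind your proof is incomplete at its load-bearing step.
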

There is a technical issue in the proof of the above theorem; in order to estimate the size of the regions $R_i$ into which the limit set cannot penetrate, we must conjugate the group $\Gamma$ by some element of $\mathrm{Isom}(\mathbb{H}^3)$ to put the surface $\widetilde{\Sigma}$ into a normalized position.  It is possible that given a sequence of almost-Fuchsian groups $\Gamma_n,$ the elements $I_n$ by which we conjugate leave every compact set of $\mathrm{Isom}(\mathbb{H}^3).$  Since the action of $\mathrm{Isom}(\mathbb{H}^3)$ on $\partial_{\infty}(\mathbb{H}^3)$ is not isometric, this could destroy any control we had gained on the size of the $R_i.$  Proving that we may always conjugate the group into a normalized position by an isometry with some bounded translation distance relies strongly on the geometry of the minimal surface.
\subsection{Historical context}

In \cite{UHL83}, Uhlenbeck conjectured\footnote{It is more appropriate to say that this conjecture is implicit in the work of Uhlenbeck.  The exact quote in ~\cite{UHL83} is, "If the area minimizing surfaces in M are isolated, we expect a large number of minimal surfaces in quasi-Fuchsian manifolds near M."  The $M$ she refers to is the doubly-degenerate manifold which arises as a cyclic cover of a closed manifold fibering over the circle.} that any dou\-bly-degenerate hyperbolic 3-manifold M contains infinitely many distinct incompressible (stable) minimal surfaces which are homotopy equivalent to M.  Examples of this phenomenon are provided by the doubly-degenerate manifolds M which arise as cyclic covers of closed hyperbolic 3-manifolds fibering over the circle.  In this case, there is an infinite cyclic group of isometries of M.  The general existence theory (~\cite{FHS83}, ~\cite{SU82} and ~\cite{SY79}) yields an incompressible minimal surface $\Sigma$ in the closed manifold which M covers.  $\Sigma$ lifts to an incompressible minimal surface in M.  The translates of $\Sigma$ by the infinite cyclic group of deck transformations yields infinitely many distinct incompressible minimal surfaces in M.  Theorem 1.1 supports a general philosophy that the number of closed incompressible (stable) minimal surfaces in a quasi-Fuchsian manifold  should serve as a measure of how far that manifold is from being Fuchsian.  

In \cite{HW13}, the authors show that if $M=\mathbb{H}^3/\Gamma$ is an almost-Fuchsian manifold and $\lambda$ is the maximum positive principal curvature of the unique closed minimal surface in $M,$ then the Hausdorff dimension of the limit set of $\Gamma$ is at most $1+\lambda^2.$  The question remains whether there exists a sequence of almost-Fuchsian groups $\Gamma_n$ such that the Hausdorff dimension of the limit set approaches $2.$  Theorem \ref{thm: ddlimits} rules out the most naive way in which this might occur.

With these more technical comments in place, we comment on the main achievement of the paper.  The primary advance in this paper is a soft qualitative necessary condition for a quasi-Fuchsian group $\Gamma$ to be almost-Fuchsian.  The recipe is as follows: look at the domain of discontinuity.  For some small threshold $R>0,$ which we calibrate in the paper, if the domain of discontinuity does not contain many balls of radius larger than $R$ as measured in the spherical metric on $\partial_{\infty}(\mathbb{H}^3),$ then $\Gamma$ is not almost-Fuchsian.  Since almost-Fuchsian groups are defined directly using the hyperbolic geometry of the quasi-Fuchsian manifold, it is quite interesting that there is a necessary condition which can be stated solely in terms of the complex geometry of M\"{o}bius transformations.

\subsection{Plan of paper}

The preliminary section $\S$\ref{sec: prelim} assembles all of the definitions and basic facts we require from the theory of Kleinian groups, minimal surfaces and introduces the hyperbolic Gauss map.  Then in $\S$\ref{subsec: holquad} we prove the main technical propositions controlling the growth of the principal curvatures of a minimal surface in $\mathbb{H}^3.$  In $\S$\ref{sub: thickregions} we prove that under the hyperbolic Gauss map nearly flat regions in the minimal surface are sent to disks of controlled radius in the image of the Gauss map.  Finally, in $\S$\ref{sec: main} we prove that the domain of discontinuity of an almost-Fuchsian group contains disks of a fixed radius.  As an application we show that there are no doubly-degenerate geometric limits of almost-Fuchsian groups.

\subsection{Acknowledgments}  

This work contains part of the author's Ph.D. thesis which was completed under the supervision of Dr. William Goldman.  I am indebted to his friendship and mathematical encouragement over the years, without which this work and so much more may not have happened.  Additionally, Zheng Huang was instrumental in the development of this project; in particular he pointed out that special attention should be paid to the zeros of holomorphic quadratic differentials, and also highlighted the work of Epstein on surfaces in $\mathbb{H}^3,$ both of which turned out to be essential.

\section{Preliminaries}\label{sec: prelim}  In all that follows, $\Sigma$ denotes a closed, connected, oriented, smooth surface of genus greater than $1.$

\subsection{Kleinian groups and hyperbolic geometry}

The n-dimensional hyperbolic space $\mathbb{H}^n$ is the unique complete, 1-connected Riemannian manifold with constant negative sectional curvature $-1.$  We will primarily work in the upper half-space model for hyperbolic space which consists of the smooth manifold 

\[\mathbb{H}^n=\{(x_1,...,x_n)\in \mathbb{R}^n \lvert x_n>0\} \]

together with the Riemannian metric 

\[dh_n^2=\frac{\delta_{ij}dx^idx^j}{x_n^2}.\]
For $\mathbb{H}^3,$ we use coordinates $(z,t)\in\mathbb{H}^3$ with $z=x+iy\in\mathbb{C}$ and $t>0.$
A Kleinian group is a discrete (torsion-free) subgroup $\Gamma<\mathrm{Isom}^{+}(\mathbb{H}^3)\simeq \mathrm{PSL}(2,\mathbb{C})$ of orientation-preserving isometries of hyperbolic 3-space.  Given a Kleinian group $\Gamma,$ the action on $\mathbb{H}^3$ extends to an action on the conformal boundary $\partial_{\infty} (\mathbb{H}^3)\simeq \mathbb{C}\cup \{\infty\}$ by M\"{o}bius transformations.  This action divides $\partial_{\infty}(\mathbb{H}^3)$ into two disjoint subsets: $\Lambda(\Gamma)$ and $\Omega(\Gamma).$  The \textit{limit set} $\Lambda(\Gamma)$ is defined to be the smallest non-empty, $\Gamma$-invariant closed subset of $\partial_{\infty}(\mathbb{H}^3).$  The \textit{domain of discontinuity} $\partial_{\infty}(\mathbb{H}^3)\backslash \Lambda(\Gamma)=\Omega(\Gamma)$ is the largest open set on which $\Gamma$ acts properly discontinuously.  The quotient $M=\mathbb{H}^3/\Gamma$ is a complete hyperbolic 3-manifold with \textit{holonomy} group $\Gamma.$  

Given a sequence of Kleinian groups $\Gamma_n,$ we say that $\Gamma_n$ converges \textit{geometrically} to the group $\Gamma$ if,

\begin{enumerate}
\item For all $\gamma\in \Gamma$ there exists $\gamma_n\in\Gamma_n$ such that $\gamma_n\rightarrow\gamma$ in $\mathrm{Isom}(\mathbb{H}^3).$  
\item If $\gamma_n\in\Gamma_n$ and $\gamma_{n_j}\rightarrow \gamma,$ then $\gamma\in\Gamma.$
\end{enumerate}
We denote geometric convergence by $\Gamma_n\rightarrow \Gamma.$  It is well known ~\cite{CEG87} that the geometric convergence of Kleinian groups is equivalent to the base-framed, Gromov-Hausdorff convergence of the associated quotient manifolds.  

Equipping $\partial_{\infty}(\mathbb{H}^3)=\mathbb{C}\cup \{\infty\}$ with the spherical metric $d_{\mathbb{S}^2}=\frac{\lvert dz\rvert^2}{(1+\lvert z\rvert^2)^2},$ a sequence of closed subsets $A_n\subset \mathbb{C}\cup \{\infty\}$ converges to $A\subset \mathbb{C}\cup \{\infty\}$ in the \textit{Hausdorff} topology if $A_n\rightarrow A$ with respect to the distance
\[d(A_n,A)=\inf\left\{r: A_n\subset \bigcup\limits_{x\in A} B_{\mathbb{S}^2}(x,r)\ \mathrm{and} \ A\subset\bigcup\limits_{x\in A_n} B_{\mathbb{S}^2}(x,r)\right\}.\]
If a sequence of Kleinian groups $\Gamma_n$ converges geometrically to $\Gamma,$ it is a simple exercise to verify that the limit set of $\Gamma$ satisfies $\Lambda(\Gamma)\subset \lim\limits_{n\rightarrow\infty}\Lambda(\Gamma_n)$ where the limit is taken with respect to the Hausdorff topology. 

Now we restrict to the case that there is an isomorphism $\rho:\pi_1(\Sigma,p_0)\rightarrow \Gamma.$  The representation $\rho$ is \textit{quasi-Fuchsian} if and only if $\Omega(\Gamma)$ consists of precisely two invariant, connected, simply-connected components.  The quotient $\Omega(\Gamma)/\Gamma=X^+ \cup \overline{X^-}$ is a disjoint union of two marked Riemann surfaces $(X^+,\overline{X^-}),$ each diffeomorphic to $\Sigma,$ where the bar over $X^-$ denotes the surface with the opposite orientation.  The marking, which is a choice of homotopy equivalence $f^{\pm}:\Sigma\rightarrow X^{\pm},$ is determined by the requirement that $f_{*}^{\pm}=\rho.$  Conversely, we have the Bers' simultaneous uniformization theorem ~\cite{BER60},

\begin{theorem}
Given an ordered pair of marked, closed Riemann surfaces $(X^+,\overline{X^-})$ each diffeomorphic to $\Sigma,$  there exists an isomorphism $\rho:\pi_1(\Sigma,p)\rightarrow \Gamma,$ unique up to conjugation in $\mathrm{PSL}(2,\mathbb{C}),$ such that $\Omega(\Gamma)/\Gamma=X^+\cup \overline{X^-}.$  
\end{theorem}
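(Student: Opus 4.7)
The plan is to adapt Bers' original strategy: uniformize $X^+$ by a Fuchsian group, then invoke the measurable Riemann mapping theorem to perform a quasiconformal deformation whose effect on the second conformal boundary component is prescribed to be $\overline{X^-}$.

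First, I would apply the classical uniformization theorem to obtain a Fuchsian group $\Gamma_F<\mathrm{PSL}(2,\mathbb{R})$ such that $\mathbb{H}^2/\Gamma_F$ is identified with $X^+$ in a marking-preserving way. The group $\Gamma_F$ also acts on the lower half plane $\mathbb{H}^{2,-}$, with quotient $\mathbb{H}^{2,-}/\Gamma_F$ canonically isomorphic to $\overline{X^+}$. Because $\overline{X^+}$ and $\overline{X^-}$ are marked closed Riemann surfaces of the same genus, there is a marking-preserving quasiconformal homeomorphism $f: \overline{X^+}\to\overline{X^-}$. Lifting $f$ to the universal cover yields a quasiconformal homeomorphism $\widetilde f$ of $\mathbb{H}^{2,-}$ whose Beltrami coefficient $\mu$ is $\Gamma_F$-invariant with $\lVert \mu \rVert_{\infty}<1$. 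I would then extend $\mu$ by zero across the real axis to obtain a Beltrami coefficient on all of $\widehat{\mathbb{C}}$.

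Next, the measurable Riemann mapping theorem produces a quasiconformal homeomorphism $w^{\mu}: \widehat{\mathbb{C}}\to\widehat{\mathbb{C}}$ with Beltrami coefficient $\mu$, normalized to fix three prescribed points. For each $\gamma\in\Gamma_F$, the chain rule combined with the $\Gamma_F$-invariance of $\mu$ shows that $w^{\mu}\gamma(w^{\mu})^{-1}$ has vanishing Beltrami coefficient; hence by the uniqueness clause of the measurable Riemann mapping theorem it is a Möbius transformation. Set $\Gamma:=w^{\mu}\Gamma_F(w^{\mu})^{-1}$ and define $\rho: \pi_1(\Sigma,p)\to\Gamma$ via the standard isomorphism $\pi_1(\Sigma,p)\cong\Gamma_F$. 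Since $\mu\equiv 0$ on $\mathbb{H}^2$, the restriction of $w^{\mu}$ there is conformal, so $w^{\mu}(\mathbb{H}^2)$ is a simply-connected $\Gamma$-invariant Jordan domain whose quotient is biholomorphic to $X^+$ as a marked Riemann surface; on $w^{\mu}(\mathbb{H}^{2,-})$ the construction identifies the quotient with $\overline{X^-}$. Hence $\Omega(\Gamma)/\Gamma=X^+\cup\overline{X^-}$ as ordered marked Riemann surfaces, and the resulting representation is quasi-Fuchsian.

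For uniqueness, suppose $\rho': \pi_1(\Sigma,p)\to\Gamma'$ satisfies the same conclusion. The biholomorphisms between corresponding boundary surfaces lift to an equivariant biholomorphism $\varphi$ between $\Omega(\Gamma)$ and $\Omega(\Gamma')$; I would extend $\varphi$ across the limit set to a Möbius transformation conjugating $\Gamma$ to $\Gamma'$. I expect the main obstacle to live here: the limit set of a quasi-Fuchsian group is a Jordan curve that need not be smooth, and the extension requires some regularity input. The standard device is to produce a quasiconformal extension of $\varphi$ to $\widehat{\mathbb{C}}$ (using that both sides are qc deformations of the same Fuchsian group and hence are qc conjugate by an equivariant map), and then to observe that this qc self-map has vanishing Beltrami coefficient off the limit set. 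Since the limit set of a quasi-Fuchsian group is qc-equivalent to a round circle and therefore has two-dimensional Lebesgue measure zero, Weyl's lemma for quasiconformal maps implies the extension is globally conformal, hence Möbius.
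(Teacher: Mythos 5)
The paper does not prove this statement: it is Bers' simultaneous uniformization theorem, quoted verbatim with a citation to \cite{BER60} and used as a black box, so there is no internal proof to compare against. Your argument is, in substance, Bers' original one (and the one reproduced in the standard references): uniformize $X^+$ by a Fuchsian group $\Gamma_F$, push the Beltrami coefficient of a marking-preserving quasiconformal map $\overline{X^+}\to\overline{X^-}$ down to a $\Gamma_F$-invariant coefficient $\mu$ supported in the lower half-plane, solve the Beltrami equation, and conjugate. The outline is correct. Two points deserve to be made explicit if you were to write this out in full. First, the identification of $w^{\mu}(\mathbb{H}^{2,-})/\Gamma$ with $\overline{X^-}$ is not automatic from the construction as stated; it comes from observing that $w^{\mu}$ and the lift $\widetilde f$ have the same Beltrami coefficient on $\mathbb{H}^{2,-}$, so that $w^{\mu}\circ\widetilde f^{-1}$ is conformal and equivariantly conjugates the Fuchsian group uniformizing $\overline{X^-}$ to the restriction of $\Gamma$. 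Second, in the uniqueness step the passage from ``conformal and equivariant on $\Omega(\Gamma)$, homeomorphic across the limit set'' to ``M\"obius'' requires that quasicircles be removable for (quasi)conformal homeomorphisms; mere two-dimensional Lebesgue measure zero is not by itself enough to apply Weyl's lemma, since one must first know the spliced map lies in $H^{1}_{loc}$ across the limit set. The removability of quasicircles is a genuine theorem (reduce to a round circle by a quasiconformal change of coordinates), and you correctly flag this as the delicate point, so I regard this as an acknowledged technical step rather than a gap.
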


It is a striking feature of the diversity of hyperbolic 3-manifolds that there exist isomorphisms $\rho:\pi_1(\Sigma,p)\rightarrow \Gamma$ onto Kleinian groups which are not quasi-Fuchsian.  A Kleinian surface group $\Gamma\simeq \pi_1(\Sigma,p)$ is called \textit{doubly degenerate} if the domain of discontinuity for $\Gamma$ is empty; $\Omega(\Gamma)=\emptyset.$  The first explicit examples of doubly-degenerate groups were free groups of rank 2 discovered by Jorgenson ~\cite{JOR77}.

Due to the resolution of the ending lamination conjecture, the isometry classification of doubly-degenerate manifolds is now well understood.  Nonetheless, their fine scale geometry is extremely intricate and interesting.  Bonahon showed ~\cite{BON86} that they are diffeomorphic to $\Sigma \times \mathbb{R},$ then the ending lamination theorem (for surface groups) ~\cite{MIN10}, ~\cite{BCM12} in conjunction with Thurston's double limit theorem ~\cite{THU} can be applied to see that every doubly degenerate group is a geometric limit of quasi-Fuchsian groups.  

\subsection{Minimal surfaces and almost-Fuchsian manifolds}\label{sub: minsurfaces}

Let $(M,h)$ be a 3-dimensional Riemannian manifold.  Given an immersion $f:\Sigma\rightarrow M,$ the area of 
$f$ is the area of the Riemannian manifold $(\Sigma, g)$ where $g=f^*(h).$  Suppose $f(\Sigma)$ is two sided in $M$ and let $\nu$ be a globally defined unit normal vector field.  Then given $X$ and $Y$ tangent vectors to $f(\Sigma),$ we have the second fundamental form B defined by,

\[B(X,Y)=h(\nabla_{X} \nu, Y)\]

where $\nabla$ is the Levi-Cevita connection of $h.$  The second fundamental form $B$ defines a symmetric, contravariant 2-tensor on $\Sigma.$  By the spectral theorem for self-adjoint operators, $B$ is diagonalizable with eigenvalues $\lambda_1, \lambda_2$ whose product $\lambda_1\lambda_2$ is a smooth function on $\Sigma.$  They are called the \textit{principal curvatures} of the immersion and $\lVert B\rVert_{g}^2=\lambda_1^2+\lambda_2^2.$   

 Contracting with the metric $g$ yields the mean curvature of the immersion,

\[H=g^{ij}B_{ij}\]
where repeated upper and lower indices are to be summed over.  The immersion $f:\Sigma\rightarrow M$ is \textit{minimal} if $H=\lambda_1+\lambda_2=0.$  Equivalently, $f$ is a critical point for the area functional defined on the space of immersions of $\Sigma$ into $M.$  A minimal surface is said to be \textit{least area} if $f:\Sigma\rightarrow M$ has least area among all maps homotopic to $f;$ least area maps are minimal.

Now suppose $M$ is a hyperbolic 3-manifold.  If $f:\Sigma\rightarrow M$ is an immersion as above, the Gauss equation of the immersion is
\begin{align} \label{eqn:gauss1}
K_{g}=-1+\lambda_1\lambda_2 
\end{align}  
where $K_g$ is the sectional curvature of the metric $g.$

If $f$ is a minimal immersion, it follows that 
\begin{align}
K_{g}=-1-\lambda^2
\end{align}
where we have packaged the principal curvatures into a single smooth function $\lambda^2=\lambda_1^2=\lambda_2^2$ since they satisfy $\lambda_1=-\lambda_2.$  Then \begin{align}
\frac{1}{2} \lVert B \rVert_{g}^2=\lambda^2
\end{align}
and 
\begin{align}
K_{g}=-1-\frac{1}{2} \lVert B \rVert_{g}^2.
\end{align}

We now present the principal object of study. 

\begin{definition}
A quasi-Fuchsian 3-manifold $M$ is almost-Fuchsian if there exists a minimal immersion $f:\Sigma\rightarrow M$ with $\lVert B \rVert_{g}^2(x) <2$ for all $x\in\Sigma.$
\end{definition}

The following classification theorem is due to Uhlenbeck ~\cite{UHL83}.

\begin{theorem}
\label{thm:UHAF}
Suppose $M$ is almost-Fuchsian and homotopy equivalent to $\Sigma.$  Then,
\begin{itemize}
\item $f:\Sigma\rightarrow M$ such that $\lVert B \rVert_{g}^2(x) <2$ is the only closed minimal surface of any kind in M. \\
\item $f:\Sigma\rightarrow M$ is incompressible, i.e. it is a $\pi_1-$injective, smooth embedding.  \\
\item The normal exponential map from $f(\Sigma)$ is a diffeomorphism onto M.  In these coordinates the metric on $\Sigma \times \mathbb{R}$ can be expressed as
\[dt^2 + g(\cosh{(t)}\ \mathbb{I}(\cdot)+\sinh{(t)}\ \mathbb{S}(\cdot),\cosh{(t)}\ \mathbb{I}(\cdot)+\sinh{(t)}\ \mathbb{S}(\cdot)).\]  
Here $\mathbb{I}$ is the identity operator and $\mathbb{S}(X)=\nabla_X(\nu)$ is the shape operator associated to the second fundamental form.
\end{itemize}
\end{theorem}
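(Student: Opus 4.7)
The plan is to extract everything from the normal exponential map $F(x,t)=\exp_{f(x)}(t\,\nu(x))$: the formula in the third bullet records its pullback metric, the strict bound $|\lambda|<1$ makes it a local diffeomorphism, and a maximum principle argument simultaneously yields the uniqueness of the minimal surface in bullet (1) and the global injectivity needed to complete bullet (3), from which bullet (2) follows formally.

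The two key computations are Jacobi-field exercises along normal geodesics in $\mathbb{H}^{3}$. For the metric, the Jacobi field with initial value $X\in T_{f(x)}\Sigma$ and initial derivative $\mathbb{S}(X)$ equals $J(t)=\cosh(t)X+\sinh(t)\mathbb{S}(X)$, and pulling back $dh_{3}^{2}$ by $F$ immediately gives the displayed expression. The operator $\cosh(t)\mathbb{I}+\sinh(t)\mathbb{S}$ has eigenvalues $\cosh(t)+\lambda_{i}\sinh(t)=\cosh(t)\bigl(1+\lambda_{i}\tanh(t)\bigr)$, which vanish only when $|\lambda_{i}|=|\coth(t)|$; since $\|B\|_{g}^{2}=2\lambda^{2}<2$ forces $|\lambda_{i}|<1\leq|\coth(t)|$, the operator is nowhere singular and $F$ is a local diffeomorphism. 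A parallel Riccati computation for the shape operator of the equidistant surface $\Sigma_{t}$ gives principal curvatures $\mu_{i}=(\lambda_{i}+\tanh t)/(1+\lambda_{i}\tanh t)$, and using $\lambda_{1}=-\lambda_{2}=\lambda$ yields mean curvature
\[H_{t}\;=\;\frac{2\tanh(t)\,(1-\lambda^{2})}{1-\lambda^{2}\tanh^{2}(t)},\]
strictly positive for $t>0$ and strictly negative for $t<0$ whenever $\lambda^{2}<1$.

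Bullet (1) now follows from the strong maximum principle. For any closed minimal $g:\Sigma'\to M$, the height function $u=t\circ g$ on $\Sigma'$ attains a maximum $t_{0}$ at some $p$; there $g(\Sigma')$ is tangent to $\Sigma_{t_{0}}$ from below, so if $t_{0}>0$ the strong maximum principle for the mean curvature operator forces the surfaces to coincide locally, contradicting $0=H(g(\Sigma'))<H_{t_{0}}$. Symmetrically at the minimum, hence $u\equiv 0$, giving $g(\Sigma')=f(\Sigma)$. For bullet (3), lift $f$ to the $\pi_{1}(\Sigma)$-equivariant minimal immersion $\tilde f:\tilde\Sigma\to\mathbb{H}^{3}$ of the universal cover, whose pullback metric is complete, and consider the lift $\tilde F:\tilde\Sigma\times\mathbb{R}\to\mathbb{H}^{3}$. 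By the computation above $\tilde F$ is a proper local diffeomorphism between same-dimensional manifolds with simply connected target, hence a diffeomorphism, furnishing a global signed-distance coordinate $t$ with $\tilde\Sigma=\{t=0\}$. To descend to $M$ one must show every deck element $\gamma\in\Gamma$ preserves this coordinate: if $\gamma\tilde\Sigma\neq\tilde\Sigma$, then $\gamma\tilde\Sigma$ is a disjoint minimal lift lying on one side of $\tilde\Sigma$, and running the bullet (1) argument for $t\circ(\gamma\tilde\Sigma)$ modulo the cocompact action of $\gamma f_{*}\pi_{1}(\Sigma)\gamma^{-1}$ on $\gamma\tilde\Sigma$ produces the same mean-curvature contradiction. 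Hence $\Gamma$ acts trivially on the $\mathbb{R}$-factor, $M\cong\Sigma\times\mathbb{R}$, and the Fermi metric in (3) descends. Bullet (2) is then immediate since $f$ is the inclusion $\Sigma\times\{0\}\hookrightarrow\Sigma\times\mathbb{R}$, a smooth embedding inducing an isomorphism on $\pi_{1}$.

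The main obstacle is the descent argument joining bullets (1) and (3): asserting that every deck translate of the minimal disk coincides with itself requires a maximum principle argument run in the universal cover, with non-compactness controlled by the cocompactness of the relevant surface subgroup on each lifted leaf. This is precisely the point at which $|\lambda|<1$ is indispensable, since it supplies the strict monotonicity of $H_{t}$ and hence the rigidity of the equidistant foliation that rules out distinct parallel minimal leaves.
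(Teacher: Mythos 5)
The paper does not prove this statement at all --- it is quoted as background from Uhlenbeck \cite{UHL83} --- so there is no in-paper argument to compare against; what you have written is essentially Uhlenbeck's original proof. Its computational core is correct: the Jacobi-field derivation of the Fermi metric, the nonsingularity of $A_t=\cosh(t)\,\mathbb{I}+\sinh(t)\,\mathbb{S}$ when $|\lambda_i|<1$, the formula $H_t=2\tanh(t)(1-\lambda^2)/(1-\lambda^2\tanh^2 t)$ for the parallel surfaces, and the touching-point comparison at the extrema of the height function are all the right mechanism and all check out.

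Two steps need more than you give them. First, properness of $\tilde F$ is asserted, not argued; the standard route is that compactness of $\Sigma$ upgrades $\lambda^2<1$ to $\lambda^2\le 1-\epsilon$, so $A_t$ is uniformly bounded below, the pullback metric $dt^2+g(A_t\cdot,A_t\cdot)$ on $\tilde\Sigma\times\mathbb{R}$ is complete, and a local isometry from a complete manifold onto the simply connected $\mathbb{H}^3$ is a diffeomorphism. Second, and this is the genuine gap, your descent argument is circular: you run the maximum principle on $t|_{\gamma\tilde\Sigma}$ ``modulo the cocompact action of $\gamma f_*\pi_1(\Sigma)\gamma^{-1}$,'' but the height function $t$ is only known to be invariant under $f_*\pi_1(\Sigma)$ (it is built from the $f_*\pi_1(\Sigma)$-equivariant product structure), not under the conjugate subgroup; without that invariance $t|_{\gamma\tilde\Sigma}$ does not descend to the compact quotient and need not attain its extrema, and invariance of $t$ under all of $\Gamma$ is exactly what you are trying to prove. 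The fix avoids the maximum principle here entirely: once $\tilde F$ is a diffeomorphism, $\tilde f$ is an embedding, so $f_*$ is injective (an element of $\ker f_*$ would, by equivariance and injectivity of $\tilde f$, act trivially on $\tilde\Sigma$, hence be the identity); an injective homomorphism from $\pi_1(\Sigma)$ into $\Gamma\cong\pi_1(\Sigma)$ is automatically onto, since a proper subgroup of a closed surface group is either free (infinite index) or a surface group of strictly larger genus (finite index, by Euler characteristic). Hence $\Gamma=f_*\pi_1(\Sigma)$ preserves the product structure by construction, there is nothing to descend, and with the order (3) in the universal cover, then (2), then (1), your argument closes up.
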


The following theorem, first observed by Hopf ~\cite{HOP54}, will be of great importance to us.

\begin{theorem}
Let $f:\Sigma\rightarrow M$ be a minimal immersion and M a constant curvature Riemannian 3-manifold.  Let $z$ be a local holomorphic coordinate for the Riemann surface structure induced on $\Sigma.$  If $B$ is the second fundamental form of the immersion, then the expression

\[\alpha = (B_{11}-iB_{12})dz^2\]
is an invariantly defined holomorphic quadratic differential on $\Sigma$ with $\mathfrak{Re}(\alpha)=B.$
\end{theorem}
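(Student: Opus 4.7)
The plan is to work in isothermal (conformal) coordinates $z = x+iy$ adapted to the conformal class on $\Sigma$, in which the induced metric reads $g = e^{2\phi}\lvert dz\rvert^2$, and then verify the three assertions separately: that $\mathfrak{Re}(\alpha) = B$, that $\alpha$ is independent of the choice of isothermal coordinate (hence is an invariantly defined section of $K_\Sigma^{\otimes 2}$), and finally that the coefficient $B_{11}-iB_{12}$ is holomorphic.

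First I would verify the real-part identity. In isothermal coordinates we have $g_{11}=g_{22}=e^{2\phi}$ and $g_{12}=0$, so the minimality condition $H = g^{ij}B_{ij} = 0$ reduces to $B_{11}+B_{22}=0$, i.e.\ $B_{22}=-B_{11}$. Expanding
\[
(B_{11}-iB_{12})\,dz^2 = (B_{11}-iB_{12})\bigl((dx^2-dy^2) + 2i\,dx\,dy\bigr),
\]
the real part is $B_{11}(dx^2-dy^2)+2B_{12}\,dx\,dy$, which under $B_{22}=-B_{11}$ is exactly $B_{11}dx^2+2B_{12}dxdy+B_{22}dy^2 = B$.

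Next I would check the transformation rule. If $w(z)$ is a biholomorphic change of isothermal coordinate, then $g$ is still of conformal type and $B$, as a symmetric $2$-tensor, transforms by the real Jacobian of $w$. A direct computation using $w_{\bar z}=0$ shows that $(B_{11}-iB_{12})$ transforms by the factor $\overline{w'(z)}^{-2}\cdot\lvert w'(z)\rvert^{2}$; combined with $dz^2 = (dw/w'(z))^2$, the section $\alpha$ is unchanged. (Equivalently, once invariance of $\mathfrak{Re}(\alpha)=B$ is established and $\alpha$ is seen below to be of type $(2,0)$, the Hodge decomposition of the complexified cotangent bundle forces $\alpha$ to be the unique $(2,0)$-part whose real part equals $B$, which proves invariance without further computation.)

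The main step, and what I expect to be the principal obstacle, is holomorphicity: showing $\partial_{\bar z}(B_{11}-iB_{12})=0$. The key input is the Codazzi--Mainardi equations in the constant-curvature target $M$, which assert that the covariant derivative $\nabla B$ is totally symmetric in its three slots. Written out in the local frame $\{\partial_x,\partial_y\}$ with the Levi--Civita connection of $g = e^{2\phi}|dz|^2$, symmetry of $\nabla B$ plus the minimality relation $B_{22}=-B_{11}$ reduces, after cancellation of the Christoffel contributions (here one uses that the only nonzero $\Gamma^k_{ij}$ are expressed through $\partial\phi$, and they pair against the trace-free part of $B$ to cancel), to the pair of scalar identities
\[
\partial_x B_{11} + \partial_y B_{12} = 0, \qquad \partial_x B_{12} - \partial_y B_{11} = 0.
\]
These are exactly the Cauchy--Riemann equations for $B_{11}-iB_{12}$, so this coefficient is holomorphic in $z$ and $\alpha$ is a holomorphic quadratic differential. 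The delicate point to get right is the cancellation of the connection terms; it is where the hypothesis of constant sectional curvature (via the vanishing of the relevant components of the curvature tensor contributing to Codazzi) and minimality both enter, and once those are handled the result follows.
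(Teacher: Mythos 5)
The paper does not prove this statement at all: it is quoted as a classical result of Hopf with a citation to \cite{HOP54}, so there is no in-paper argument to compare against. Your proposal is the standard proof and its overall structure is sound: in isothermal coordinates minimality gives $B_{22}=-B_{11}$, the real-part identity is a direct expansion of $dz^2=(dx^2-dy^2)+2i\,dx\,dy$, and holomorphicity follows from the Codazzi equation (total symmetry of $\nabla B$, valid because the ambient curvature tensor of a space form has no normal component) together with the vanishing of the mixed component $B_{z\bar z}$; your two scalar identities are indeed the Cauchy--Riemann equations for $B_{11}-iB_{12}$. One correction: the transformation factor you assert for the coefficient, $\overline{w'(z)}^{-2}\lvert w'(z)\rvert^{2}$, is wrong --- it has unit modulus, so multiplied against $dz^2=dw^2/(w'(z))^2$ it would leave a residual $\lvert w'(z)\rvert^{-2}$ and $\alpha$ would not be invariant. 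The correct factor is $(w'(z))^{-2}$, which one sees immediately from $B_{11}-iB_{12}=2B(\partial_z,\partial_z)$ and the tensoriality of $B$ together with $\bar\partial w=0$. Your parenthetical alternative --- that $\alpha$ is twice the $(2,0)$-part of the complexified tensor $B$, whose $(1,1)$-part vanishes by minimality, and is therefore invariantly defined --- is the clean way to dispose of this step, and I would make it the primary argument rather than the fallback.
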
 

In terms of the quadratic differential $\alpha,$ the almost-Fuchsian condition translates to the condition that $\lambda^2=\lVert \alpha \rVert_{g}^2< 1.$  Uhlenbeck's theorem \ref{thm:UHAF} along with an implicit function theorem argument (see ~\cite{UHL83}) shows that the space of isotopy classes of almost-Fuchsian manifolds is parameterized via an open neighborhood of the zero section of the bundle of holomorphic quadratic differential over $\mathcal{T}(\Sigma)$ the Teichmuller space of $\Sigma.$  Recall the Teichmuller space $\mathcal{T}(\Sigma)$ is the space of isotopy classes of complex structures on $\Sigma$ compatible with the orientation.  The bundle of holomorphic quadratic differentials can be identified with $T^{*}\mathcal{T} (\Sigma)$ the cotangent bundle of $\mathcal{T}(\Sigma).$  

We will denote the space (of isotopy classes) of almost-Fuchsian manifolds by $\mathcal{AF},$ and say that $(g,\alpha)\in\mathcal{AF}$ is an almost-Fuchsian manifold with data $(g,\alpha)$ if there exists an almost-Fuchsian manifold such that the unique minimal surface has induced metric $g$ and second fundamental form $\mathfrak{Re}(\alpha).$  Throughout the paper, we are concerned with specific manifolds and not just manifolds up to isotopy so $(g,\alpha)\in\mathcal{AF}$ is understood to be a representative of the isotopy class. 

Given $(g,\alpha)\in\mathcal{AF}$ with holonomy group $\Gamma,$ we call the unique embedded $\Gamma$-invariant minimal disk $\widetilde{\Sigma}\subset\mathbb{H}^3$ an almost-Fuchsian disk with data $(g,\alpha).$  An almost-Fuchsian disk $\widetilde{\Sigma}$ is \textit{normalized} if
\begin{enumerate}
\item The point $p=(0,0,1)\in \widetilde{\Sigma}$ and the oriented unit normal to $\widetilde{\Sigma}$ at $p$ is $(0,0,-1).$
\item The principal curvatures vanish at $p:\ \alpha(p)=0.$ 
\end{enumerate}

If $(g,\alpha)\in\mathcal{AF}$ with holonomy group $\Gamma,$ we can always select $I\in \mathrm{Isom}(\mathbb{H}^3)$ such that the $I\Gamma I^{-1}$-invariant almost-Fuchsian disk is normalized.  Such a choice of $I$ is unique up to the action of the circle group $U(1)<\mathrm{Isom}(\mathbb{H}^3)$ of rotations about the z-axis in the upper half-space model of hyperbolic space.

\subsection{Hyperbolic Gauss Map}
Details on the material in this section may be found in the paper of Epstein ~\cite{EPS86}.  Let $S\subset\mathbb{H}^3$ be an oriented, embedded surface.  Let $N$ be a global unit normal vector field on $S$ such that if $\{X,Y\}\subset TS$ is an oriented basis of the tangent space of $S,$ then $\{X,Y, N\}$ extends to an oriented basis of $T\mathbb{H}^3.$  Given $p\in S,$ let $\gamma_p(t)$ be the unit speed geodesic ray with
initial point $\gamma_p(0)=p$ and initial velocity $\frac{d\gamma_p(t)}{dt}\vert_{t=0}=N.$

\begin{definition}
The forward hyperbolic Gauss map associated to $S$ is the map $\mathcal{G}_{S}^{+}:S\rightarrow \partial_{\infty}(\mathbb{H}^3)$ defined by
\[\mathcal{G}_{S}^{+}(p)=\displaystyle\lim_{t\rightarrow+\infty}\gamma_p(t).\]
\end{definition} 

If we use $-N$ in the definition we shall call the associated map $\mathcal{G}_{S}^{-}$ the backwards hyperbolic Gauss map.  

We quickly recall the results we shall utilize from ~\cite{EPS84}, ~\cite{EPS86}.  Let $f:\mathbb{D}\rightarrow \mathbb{H}^3$ be an immersion of the unit disk $\mathbb{D}$ such that the principal curvatures are always contained in $(-1+\varepsilon,1-\varepsilon)$ for some $\varepsilon>0.$  Epstein proves,
\begin{theorem}[\cite{EPS84}, ~\cite{EPS86}] \label{thm:eps}
Let $S=f(\mathbb{D}).$  The immersion $f:\mathbb{D}\rightarrow \mathbb{H}^3$ satisfies,
\begin{enumerate}
\item $f$ is a proper embedding. 
\item $f$ extends to an embedding $\overline{f}:\overline{\mathbb{D}}\rightarrow \mathbb{H}^3\cup \partial_{\infty}(\mathbb{H}^3).$  So $\partial_{\infty}(S)=\overline{f}(\partial \mathbb{D}\setminus \mathbb{D})$ is a Jordan curve.
\item Each hyperbolic Gauss map $\mathcal{G}_{S}^{\pm}:S\rightarrow \partial_{\infty}(\mathbb{H}^3)$ is a quasiconformal diffeomorphism onto a component of $\partial_{\infty}(\mathbb{H}^3)\setminus 
\partial_{\infty}(S).$  Note that the Jordan curve theorem implies $\partial_{\infty}(\mathbb{H}^3)\setminus \partial_{\infty}(S)$ consists of two components.  
\end{enumerate}
\end{theorem}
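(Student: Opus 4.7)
The plan is to analyze the surface $S$ via its normal exponential foliation in $\mathbb{H}^3$. Define $E:\mathbb{D}\times\mathbb{R}\to\mathbb{H}^3$ by $E(p,t)=\exp_{f(p)}(tN(p))$. A standard Jacobi field computation along the normal geodesics shows that the derivative of $E$ in the directions tangent to $\mathbb{D}$ is given by $\cosh(t)\,\mathbb{I}+\sinh(t)\,\mathbb{S}$, whose eigenvalues $\cosh t+\lambda_i\sinh t=\cosh t\,(1+\lambda_i\tanh t)$ are bounded below in absolute value by $\varepsilon\cosh t>0$ for every $t\in\mathbb{R}$, using the bound $|\lambda_i|<1-\varepsilon$. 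Hence $E$ is a local diffeomorphism everywhere, and the pulled-back metric has the product form $dt^2+g((\cosh t\,\mathbb{I}+\sinh t\,\mathbb{S})\cdot,(\cosh t\,\mathbb{I}+\sinh t\,\mathbb{S})\cdot)$ displayed in Theorem~\ref{thm:UHAF}.

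To upgrade $E$ to a global diffeomorphism, I would next verify that $(\mathbb{D}\times\mathbb{R},E^{*}h_3)$ is a complete Riemannian manifold (using that the induced metric $g$ on $\mathbb{D}$ is complete, which in this regime follows from the principal-curvature bound via standard ODE comparisons along geodesics) and then run a covering-space argument: a local isometric immersion from a complete, simply-connected Riemannian manifold into another simply-connected manifold such as $\mathbb{H}^3$ must be a diffeomorphism onto an open set $U\subset\mathbb{H}^3$. Restricting to $t=0$ then yields that $f$ is a proper embedding. For the boundary behavior, I would use the explicit form of $E$ in the upper half-space model: since $|\lambda_i|<1-\varepsilon$, the transverse length $|dE_{(p,t)}|$ grows like $\cosh t$ while the vertical component of $\gamma_p(t)$ decays like $e^{-|t|}$, forcing each curve $t\mapsto E(p,t)$ to converge to well-defined endpoints $\mathcal{G}^\pm(f(p))\in\partial_\infty(\mathbb{H}^3)$, with convergence uniform on compact subsets of $\mathbb{D}$. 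This continuity extends across $\partial\mathbb{D}$ to produce the map $\overline{f}:\overline{\mathbb{D}}\to\mathbb{H}^3\cup\partial_\infty(\mathbb{H}^3)$.

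Finally, for the quasi-conformality and image description of $\mathcal{G}^\pm$, I would differentiate the identity $\mathcal{G}^\pm(f(p))=\lim_{t\to\pm\infty}E(p,t)$ through the limit to express $d\mathcal{G}^\pm$, in a suitable conformal trivialization at the ideal boundary, as a scalar multiple of $\mathbb{I}\pm\mathbb{S}$, whose ratio of eigenvalues is controlled purely in terms of $\varepsilon$; this produces a uniform dilatation bound and proves $\mathcal{G}^\pm$ is $K(\varepsilon)$-quasiconformal. Injectivity of $\mathcal{G}^\pm$ follows from the global injectivity of $E$, and the images $\mathcal{G}^\pm(S)$ are open simply-connected subsets of $\partial_\infty(\mathbb{H}^3)$ disjoint from one another. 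The hard part I anticipate is establishing that $\partial_\infty(S)=\overline{f}(\partial\mathbb{D})$ is a genuine \emph{Jordan} curve rather than a more pathological continuum: this demands that the boundary extension $\overline{f}\vert_{\partial\mathbb{D}}$ is a topological injection, which should come from arguing that distinct boundary points of $\mathbb{D}$ correspond to geodesic rays in $S$ that remain asymptotically separated in $\mathbb{H}^3$ and therefore escape to distinct ideal endpoints, with the quasi-conformal control on $\mathcal{G}^\pm$ preventing collapse along sequences approaching $\partial\mathbb{D}$.
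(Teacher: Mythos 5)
The paper does not actually prove Theorem \ref{thm:eps}: it is quoted from Epstein \cite{EPS84}, \cite{EPS86}, so there is no internal proof to compare against. That said, your skeleton --- the normal exponential map $E(p,t)$, the Jacobi-field computation giving $\cosh t\,\mathbb{I}+\sinh t\,\mathbb{S}$ with eigenvalues bounded below by $\varepsilon\cosh t$, the completeness-plus-covering-space argument to globalize, and the rescaled limit of $dE(\cdot,t)$ identifying $d\mathcal{G}^{\pm}$ with a multiple of $\mathbb{I}\pm\mathbb{S}$ --- is the standard route (it is also how the third bullet of Theorem \ref{thm:UHAF} is obtained), and part (1) together with the dilatation bound in (3) is essentially complete, modulo one caveat: completeness of the induced metric $g$ does \emph{not} follow from the principal-curvature bound (a small piece of a totally geodesic plane satisfies the hypothesis and is incomplete); it must be assumed, as Epstein does, and as holds in the paper's application where $\widetilde{\Sigma}$ covers a closed surface.

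Two steps have genuine gaps. First, your claim that injectivity of $\mathcal{G}^{\pm}$ ``follows from the global injectivity of $E$'' is a logical error: injectivity of $E$ says the forward normal rays through distinct points of $S$ are pairwise disjoint, but disjoint geodesic rays in $\mathbb{H}^3$ can perfectly well be asymptotic and share an ideal endpoint. An extra argument is required --- for instance, that the induced metric on the parallel surface $S_t$ dominates $\varepsilon^2\cosh^2(t)\,g$, so two normal rays whose extrinsic distance stayed bounded would contradict the fact that $E$ is a diffeomorphism with controlled derivative; or Epstein's route of showing $\mathcal{G}^{\pm}$ is a proper local homeomorphism onto a simply connected domain, hence a covering, hence injective. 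Second, conclusion (2) is not actually addressed: uniform convergence of $E(p,t)$ as $t\to\pm\infty$ on compact subsets of $\mathbb{D}$ says nothing about the behavior of $f(p)$ as $p\to\partial\mathbb{D}$, which is what the continuous extension $\overline{f}$ and the Jordan-curve claim require. You flag injectivity of $\overline{f}\vert_{\partial\mathbb{D}}$ as the hard part, but even the existence of the continuous boundary extension is missing, and this is where the real content of Epstein's theorem lies; it is also what underwrites the ``onto a component of $\partial_{\infty}(\mathbb{H}^3)\setminus\partial_{\infty}(S)$'' clause of (3), which your sketch asserts (disjointness and simple connectivity of the two images) without proof.
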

\begin{figure}[h]
  \centering
    \includegraphics[width=3in]{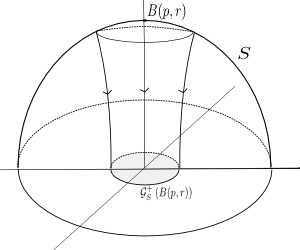}
  \caption{The image of a disk under the forward hyperbolic Gauss map from a totally geodesic plane in $\mathbb{H}^3.$}
\end{figure}

Recall that given a homeomorphism $f:D_1\rightarrow D_2$ between domains $D_1, D_2\subset \mathbb{C},$ we say that $f$ is a \textit{quasiconformal} homeomorphism if:
\begin{itemize}
\item $f\in H^1_{loc}(D_1),$ that is both $f$ and its distributional derivatives $f_z,\ f_{\overline{z}}$ are locally square-integrable on $D_1.$  
\item There exists $\mu\in L^{\infty}(D_1)$ with $\lVert \mu \rVert_{L^{\infty}}<1$ such that $f_{\overline{z}}=\mu f_z$ in the sense of distributions.  
\end{itemize}
\textbf{Remark:}  The quasiconformal maps we meet will be smooth, so the references to distributions above can be safely ignored.

Defining the \textit{dilatation} of $f$ to be the quantity 
\begin{align}
K=\frac{1+\lVert \mu \rVert_{L^{\infty}}}{1-\lVert \mu \rVert_{L^{\infty}}},\ 
\end{align}
$f$ is said to be $K$-quasiconformal.  Observe that a $1$-quasiconformal map is conformal.

From theorem \ref{thm:eps} it immediately follows that if $\widetilde{\Sigma}\subset \mathbb{H}^3$ is an almost-Fuchsian disk invariant under an almost-Fuchsian group $\Gamma,$ each hyperbolic Gauss map
\[\mathcal{G}_{\widetilde{\Sigma}}^{\pm}:\widetilde{\Sigma}\rightarrow \partial_{\infty}(\mathbb{H}^3)\]
is a (quasiconformal) diffeomorphism onto one component of the domain of discontinuity $\Omega$ of $\Gamma.$  Furthermore, $\pi_1(\Sigma, p_0)\simeq \Gamma$ implies
$\partial_{\infty}(\widetilde{\Sigma})$ identifies homeomorphically with the limit set $\Lambda(\Gamma).$

\section{Technical Estimates}\label{subsec: holquad}

This section contains some technical results which give us control on the growth of the principal curvature function for a minimal surface immersed in a hyperbolic 3-manifold.

We begin with a basic result establishing a bound on the growth of the $L^{\infty}$-norm of a bounded, holomorphic quadratic differential $\alpha=f(z) \ dz^2$ on $\mathbb{H}^2.$  We denote the canonical bundle of holomorphic 1-forms on $\mathbb{H}^2$ by $K_{\mathbb{H}^2}.$  In this section we will utilize the Poincar\'{e} disk model of the hyperbolic plane consisting of the unit disk $\mathbb{D}\subset \mathbb{C}$ with the metric
\[\frac{4\lvert dz\rvert^2}{(1-\lvert z\rvert^2)^2}.\]
 
\begin{proposition}
\label{prop:bdQD}
Let $\alpha\in H^{0}\left(\mathbb{H}^2,K_{\mathbb{H}^2}^2\right)$ be a holomorphic quadratic differential on the hyperbolic plane and assume there exists a $C>0$ such that $\sup\limits_{z\in\mathbb{H}^2}\lVert\alpha\lVert_{\mathbb{H}^2}(z)\leq C.$  Assume there is $z_0\in \mathbb{H}^2$ such that $\alpha(z_0)=0.$  Then for all $\varepsilon>0,$ there exists $r=r(\varepsilon, C)>0$ such that 
\[\lVert\alpha\rVert_{\mathbb{H}^2}(x)<\varepsilon\]
for all $x\in B_{\mathbb{H}^2}(z_0,r)$ where $B_{\mathbb{H}^2}(z_0,r)$ is the ball of hyperbolic radius $r$ centered at $z_0.$
\end{proposition}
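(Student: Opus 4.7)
The plan is to reduce to a classical Schwarz lemma argument in the Poincar\'{e} disk. Since the hyperbolic metric and the pointwise norm $\lVert \alpha \rVert_{\mathbb{H}^2}$ are both invariant under the transitive action of $\mathrm{Isom}(\mathbb{H}^2)$, I would first precompose $\alpha$ with an isometry sending $z_0$ to the origin of $\mathbb{D}$; so assume $z_0 = 0$. Write $\alpha = f(z)\,dz^2$ with $f$ holomorphic on $\mathbb{D}$ and $f(0) = 0$. Relative to the Poincar\'{e} metric, the bound $\lVert \alpha \rVert_{\mathbb{H}^2}(z) \leq C$ translates to the Euclidean bound
\[
|f(z)| \;\leq\; \frac{4C}{(1 - |z|^2)^2}.
\]

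Next, fix an intermediate Euclidean radius $\rho \in (0,1)$, say $\rho = 1/2$. On the closed disk $\{|z|\leq \rho\}$ the above gives a uniform bound $|f(z)| \leq M := 4C(1-\rho^2)^{-2}$ depending only on $C$. Because $f$ is holomorphic and vanishes at the origin, the classical Schwarz lemma applied to $f/M$ on that disk yields the linear estimate
\[
|f(z)| \;\leq\; \frac{M\,|z|}{\rho} \qquad \text{for } |z| \leq \rho.
\]
Feeding this back into the definition of the hyperbolic norm and using $(1-|z|^2)^2 \leq 1$,
\[
\lVert \alpha \rVert_{\mathbb{H}^2}(z) \;=\; |f(z)|\,\frac{(1-|z|^2)^2}{4} \;\leq\; \frac{M\,|z|}{4\rho}.
\]
This is linear in $|z|$ with a constant depending only on $C$, so requiring $|z| < 4\rho\varepsilon/M$ forces $\lVert \alpha \rVert_{\mathbb{H}^2}(z) < \varepsilon$.

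Finally, since at the origin of $\mathbb{D}$ the hyperbolic distance satisfies $d_{\mathbb{H}^2}(0,z) = 2\tanh^{-1}|z|$, the Euclidean smallness condition on $|z|$ translates to a bound $d_{\mathbb{H}^2}(0,z) < r$ for a hyperbolic radius $r = r(\varepsilon, C) > 0$ depending only on $\varepsilon$ and $C$. Pulling back by the isometry used in the first step recovers the claim at the original $z_0$.

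There is no genuine obstacle here: the content of the proposition is that a Schwarz lemma for norm-bounded holomorphic quadratic differentials on $\mathbb{H}^2$ is automatic, with the isometry-invariance allowing uniformity in $z_0$. The only point requiring care is to check that the constants produced above depend only on $\varepsilon$ and $C$, not on $z_0$ or on $\alpha$ — which is immediate from the normalization step.
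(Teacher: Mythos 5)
Your argument is correct and is essentially the paper's own proof: both normalize $z_0$ to the origin of the Poincar\'{e} disk, use the sup-norm hypothesis to get a uniform Euclidean bound on $|f|$ on the sub-disk of radius $\tfrac{1}{2}$, apply the Schwarz lemma there to get a linear bound $|f(z)|\lesssim C|z|$, and convert the resulting Euclidean radius to a hyperbolic one (the paper phrases the restriction to the half-radius disk as the substitution $g(z)=C'f(z/2)$, which is the same rescaling). The only cosmetic caveat is that the linear Schwarz estimate is valid only for $|z|\leq\rho$, so for large $\varepsilon$ one should take the minimum of your threshold with $\rho$; the paper leaves the same point implicit.
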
 

\begin{proof}
For this proof we work in the Poincar\'{e} disk model of hyperbolic space consisting of the unit disk $\mathbb{D}\subset \mathbb{C}$ with the metric
\begin{align}
\frac{4\lvert dz\rvert^2}{(1-\lvert z\rvert^2)^2}.
\end{align}
Without loss of generality, assume $z_0=0\in \mathbb{D}.$  Writing $\alpha(z)=f(z)\ dz^2,$ the condition
\begin{align} 
\sup\limits_{z\in\mathbb{H}^2}\lVert\alpha\lVert_{\mathbb{H}^2}(z)\leq C
\end{align}
becomes
\begin{align}\label{eqn: normbound}
\frac{(1-\lvert z\rvert^2)^2}{4}\lvert f(z)\rvert\leq C
\end{align}
for all $z\in\mathbb{D}.$  Define an auxiliary holomorphic function defined on $\mathbb{D}$ by
\begin{align}
g(z)=C'f\left(\frac{1}{2}z\right)
\end{align}
with 
\begin{align}
C'=\frac{\left(1-\frac{1}{4}\right)^2}{4C}.
\end{align}
Utilizing \eqref{eqn: normbound}, 
\begin{align}
\lvert g(z)\rvert=\frac{\left(1-\frac{1}{4}\right)^2}{4C}\left\lvert f\left(\frac{1}{2}z\right)\right\rvert\leq \frac{\left(1-\frac{1}{4}\right)^2}{\left(1-\frac{\lvert z\rvert^2}{4}\right)^2}\leq 1
\end{align}
which proves that 
\[\lvert g(z) \rvert \leq 1.\]
By construction $g(0)=0,$ hence the Schwarz lemma implies
\begin{align}
\lvert g(z) \rvert\leq \lvert z\rvert.
\end{align}
Thus,
\begin{align}
\lVert \alpha \rVert_{\mathbb{H}^2}\left(\frac{1}{2}z\right)=\frac{(1-\frac{1}{4}\lvert z\rvert^2)^2}{4}\left\lvert f\left(\frac{1}{2}z\right)\right\rvert
\leq \frac{(1-\frac{1}{4}\lvert z\rvert^2)^2}{4C'}\lvert z\rvert<\varepsilon
\end{align}
provided $\lvert z\rvert <4C'\varepsilon.$
Thus, on the Euclidean disk of radius
$2C'\varepsilon$ the estimate stated in the theorem holds.  Translating to hyperbolic radius we obtain
\begin{align}
r=\log\left(\frac{1+2C'\varepsilon}{1-2C'\varepsilon}\right).
\end{align}
This completes the proof.
\end{proof}
Our next task is to port the bounds achieved in Proposition \ref{prop:bdQD} to the case of the norm of a quadratic differential arising from a minimal surface in an almost-Fuchsian manifold.  Fortunately,
the next lemma shows that the induced metric on the surface is uniformly comparable to the hyperbolic metric.  Recall that given a Riemannian metric $g$ on $\Sigma,$ the K\"{o}ebe-Poincar\'{e} uniformization theorem provides a unique
hyperbolic metric in the conformal class of $g.$ 

\begin{lemma}
\label{lem:AFmetriccomp}
Let $(g,\alpha)\in\mathcal{AF}$ and suppose $h$ is the unique hyperbolic metric in the conformal class of $g$ with $g=e^{2u}h.$  Then,
\[\frac{-\ln(2)}{2}< u \leq 0.\]
\end{lemma}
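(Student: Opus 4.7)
The plan is to write down the standard conformal-factor PDE relating $g$ and $h$, and then extract both bounds from the maximum principle on the closed surface $\Sigma$.

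First I would recall that in two dimensions, a conformal change $g = e^{2u}h$ relates the Gaussian curvatures by
\[ K_g = e^{-2u}(K_h - \Delta_h u), \]
where $\Delta_h = \mathrm{div}_h\,\nabla_h$ is the (negative-at-a-max) Laplacian of $h$. Since $h$ is hyperbolic, $K_h = -1$, and since $g$ arises from a minimal immersion into $\mathbb{H}^3$, the Gauss equation of Section \ref{sub: minsurfaces} gives $K_g = -1 - \lambda^2$. Combining these two identities yields the pointwise equation
\[ \Delta_h u \;=\; e^{2u}(1+\lambda^2) - 1 \qquad \text{on } \Sigma. \]

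Now I would apply the maximum principle at both extrema of $u$, which exist because $\Sigma$ is closed. At a maximum $p^+$, $\Delta_h u(p^+) \leq 0$, so $e^{2u(p^+)}(1+\lambda^2(p^+)) \leq 1$. Because $\lambda^2 \geq 0$, this forces $e^{2u(p^+)} \leq 1$ and hence $u \leq 0$ everywhere, giving the upper bound. At a minimum $p^-$, $\Delta_h u(p^-) \geq 0$, so $e^{2u(p^-)} \geq \frac{1}{1+\lambda^2(p^-)}$. The almost-Fuchsian hypothesis $\lambda^2 < 1$ enters here: it gives $1 + \lambda^2(p^-) < 2$, hence $e^{2u(p^-)} > \tfrac{1}{2}$, which rearranges to $u > -\tfrac{\ln 2}{2}$.

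There is no real obstacle here beyond choosing the right sign convention for $\Delta_h$ and noting where the strict inequality comes from: the strictness in the lower bound $u > -\tfrac{\ln 2}{2}$ is inherited directly from the strict inequality $\lambda^2 < 1$ in the definition of almost-Fuchsian, whereas the upper bound $u \leq 0$ uses only $\lambda^2 \geq 0$ and is in fact an equality iff the surface is totally geodesic. Compactness of $\Sigma$ is used only to guarantee that the extrema of $u$ are attained, so no further analytic machinery is needed.
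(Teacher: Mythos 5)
Your proof is correct and takes essentially the same approach as the paper: both derive the conformal-factor equation $\Delta_h u = e^{2u}(1+\lambda^2)-1$ from the Gauss equation and apply the maximum principle at the extrema of $u$, using $\lambda^2\geq 0$ for the upper bound and the strict almost-Fuchsian inequality $\lambda^2<1$ for the strict lower bound. The only cosmetic difference is that the paper rewrites the term $e^{2u}\lVert\alpha\rVert_g^2$ as $e^{-2u}\lVert\alpha\rVert_h^2$ before treating the maximum, which changes nothing in the argument.
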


\begin{proof}
Since $g=e^{2u}h$ and the sectional curvature of $h$ is equal to $-1,$ we have the standard equation,

\[K_g=e^{-2u}(-\Delta_h u-1)\]
where $K_g$ is the sectional curvature of $g.$  A further application of the Gauss equation \eqref{eqn:gauss1} reveals,
\begin{align}
-1-\lVert\alpha\rVert_{g}^2=e^{-2u}(-\Delta_h u-1). \label{eqn:gauss2}
\end{align}

Since $(g,\alpha)\in\mathcal{AF},$ the inequality $-1-\lVert\alpha\rVert_{g}^2> -2$ holds.  Let $p\in\Sigma$ be a local minimum for $u.$  Then,

\[-2< e^{-2u}(-\Delta_h u-1)\leq -e^{-2u}\]
since $-\Delta_{h}u(p)\leq 0.$  Thus, $\frac{-\ln(2)}{2}< u.$

Now we apply the maximum principal directly to the equation \eqref{eqn:gauss2}, 
\[\Delta_{h}u+1-e^{2u}-e^{-2u}\lVert\alpha\rVert_{h}^2=0.\]
If $p\in\Sigma$ is a local maximum of $u,$ then $\Delta_{h} u(p)\leq 0$ which implies $-e^{2u(p)}+1\geq 0,$ so $u\leq 0$ and the proof is complete.
\end{proof}

In lieu of the previous lemma, the following proposition is a direct consequence of proposition \ref{prop:bdQD}.
\begin{proposition}
\label{prop:bdQD2}
Let $(g,\alpha)\in\mathcal{AF}$ and $p\in\Sigma$ such that  $\alpha(p)=0.$  Then for all $\varepsilon>0,$ there exists $r(\varepsilon)>0$ such that 
\[\lVert\alpha\rVert_{g}(x)<\varepsilon\]
for all $x\in B_{g}(p,r),$ where $B_{g}(p,r)$ is the ball of radius $r$ centered at $p$ as measured in the metric $g.$
\end{proposition}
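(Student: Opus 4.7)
The plan is to reduce the claim to Proposition \ref{prop:bdQD} by passing to the universal cover, using Lemma \ref{lem:AFmetriccomp} as the bridge between the induced metric $g$ and its associated hyperbolic metric. Let $\pi : \widetilde{\Sigma} \to \Sigma$ denote the universal covering, and identify $\widetilde{\Sigma}$ with $\mathbb{H}^2$ via uniformization of the conformal class of $g$. Write $\tilde h$ for the hyperbolic metric on $\widetilde{\Sigma}$, $\tilde g = e^{2\tilde u}\tilde h$ for the lift of $g$, and $\tilde\alpha$ for the lift of $\alpha$. Fix a lift $\tilde p$ of $p$; by hypothesis $\tilde\alpha(\tilde p) = 0$. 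The strategy has three steps: (i) show that $\tilde\alpha$ is uniformly bounded in the hyperbolic norm $\|\cdot\|_{\tilde h}$; (ii) invoke Proposition \ref{prop:bdQD} to get a hyperbolic ball about $\tilde p$ on which $\|\tilde\alpha\|_{\tilde h}$ is small; (iii) convert the bound back to the $\tilde g$-norm and translate the $\tilde h$-ball to a $\tilde g$-ball, then push everything down to $\Sigma$.

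For step (i), a short computation in isothermal coordinates (using $g = \rho_g^2|dz|^2$ with $\rho_g = e^{\tilde u}\rho_h$) gives the conformal rescaling law $\|\tilde\alpha\|_{\tilde h} = e^{2\tilde u}\|\tilde\alpha\|_{\tilde g}$. The almost-Fuchsian assumption gives $\|\tilde\alpha\|_{\tilde g} < 1$ everywhere, and the upper bound $\tilde u \leq 0$ from Lemma \ref{lem:AFmetriccomp} then forces $\|\tilde\alpha\|_{\tilde h} < 1$ as well. Thus Proposition \ref{prop:bdQD} applies with $C = 1$: given any $\varepsilon' > 0$ there exists $r' = r'(\varepsilon')$ such that $\|\tilde\alpha\|_{\tilde h}(x) < \varepsilon'$ for all $x \in B_{\tilde h}(\tilde p, r')$.

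For step (iii), the lower bound $\tilde u > -\tfrac{\ln 2}{2}$ from the same lemma yields $e^{-2\tilde u} < 2$, so $\|\tilde\alpha\|_{\tilde g}(x) = e^{-2\tilde u(x)}\|\tilde\alpha\|_{\tilde h}(x) < 2\varepsilon'$ on $B_{\tilde h}(\tilde p, r')$. Choosing $\varepsilon' = \varepsilon/2$ produces the required $\varepsilon$ bound for the $\tilde g$-norm. To replace the $\tilde h$-ball by a $\tilde g$-ball, observe that $ds_{\tilde g} = e^{\tilde u} ds_{\tilde h} > \tfrac{1}{\sqrt 2}\, ds_{\tilde h}$, whence $d_{\tilde h} \leq \sqrt 2\, d_{\tilde g}$ and therefore $B_{\tilde g}(\tilde p, r'/\sqrt 2) \subset B_{\tilde h}(\tilde p, r')$. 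Since $\pi$ is a local isometry for both metrics and intertwines $\tilde\alpha$ with $\alpha$, any $x \in B_g(p, r'/\sqrt 2)$ lifts along some path to $\tilde x \in B_{\tilde g}(\tilde p, r'/\sqrt 2)$ with $\|\alpha\|_g(x) = \|\tilde\alpha\|_{\tilde g}(\tilde x) < \varepsilon$. Setting $r(\varepsilon) := r'(\varepsilon/2)/\sqrt 2$ finishes the argument.

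The only genuinely delicate point is getting the ball inclusion to run in the correct direction: what is needed is a $\tilde g$-ball \emph{inside} a $\tilde h$-ball on which the Schwarz-type bound holds, not the reverse. This is precisely why both the upper and the lower bounds on $\tilde u$ from Lemma \ref{lem:AFmetriccomp} are used—one for transferring the norm, the other for transferring the ball. Once both inequalities are in hand, the proof is a direct combination of Proposition \ref{prop:bdQD} and Lemma \ref{lem:AFmetriccomp}, with no further analytic input.
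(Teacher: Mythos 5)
Your proof is correct and follows essentially the same route as the paper's: use the conformal rescaling $\lVert\alpha\rVert_g = e^{-2u}\lVert\alpha\rVert_h$ together with both bounds on $u$ from Lemma \ref{lem:AFmetriccomp}, apply Proposition \ref{prop:bdQD} with $C=1$, and convert the hyperbolic ball to a $g$-ball of radius $r'/\sqrt{2}$. Your treatment is slightly more careful in making the passage to the universal cover explicit (which the paper relegates to a remark), but the argument is the same.
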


\textbf{Remark:} Note that $\alpha$ is a holomorphic quadratic differential on a closed Riemann surface of genus $g>1.$  Thus, they are holomorphic sections of the square of the canonical bundle $K^2$ which has degree $4g-4.$  By standard Riemann surface theory, $\alpha$ has $4g-4$ zeros counting multiplicity.   
\\
\\
\textbf{Remark:} The above proposition is equally true if we work in the metric universal cover $\widetilde{\Sigma}$ of $\Sigma.$  This will be important in our applications.

\begin{proof}
Let $g=e^{2u}h$ where $h$ is the unique hyperbolic metric in the conformal class of $g.$  Then $\lVert \alpha \rVert_{g}=e^{-2u}\lVert \alpha \rVert_{h}.$
Combined with Lemma ~\ref{lem:AFmetriccomp} this implies $\lVert \alpha \rVert_{g}\leq 2\lVert \alpha \rVert_{h}.$  Let $p\in\Sigma$ be such that $\alpha(p)=0.$  Since $\lVert \alpha \rVert_{g}<1,$ proposition ~\ref{prop:bdQD} implies that for all $\varepsilon>0$ we can find $r'>0$ such that 
$\lVert \alpha \lVert_{h}(x)<\frac{\varepsilon}{2}$ for all $x\in B_{h}(p,r').$  Now, since $u> \frac{-\ln(2)}{2}$ we observe $B_{g}(p,r)\subset B_{h}(p,r')$ where
$r=\frac{r'}{\sqrt{2}}.$ It follows that 
\[\lVert \alpha \rVert_{g}(x)\leq 2\lVert \alpha \rVert_{h}(x)\leq \varepsilon\]
for all $x\in B_{g}(p,r)\subset B_{h}(p,r').$
\end{proof}

By the Gauss equation, on $B_{g}(p,r)$ the sectional curvature $K_g$ of $g$ satisfies $-1-\varepsilon^2 \leq K_g \leq -1.$  Thus the disks $B_g(p,r)$ are very close to being totally geodesic. These are the nearly geodesic regions referred to in the introduction.

\section{Thick regions in the Gauss map image} \label{sub: thickregions}

In this section we show that the nearly geodesic regions previously obtained are mapped, via the hyperbolic Gauss map, to regions in $\partial_{\infty}(\mathbb{H}^3)$ which have a uniformly bounded thickness.  This will rely on the generalization of the K\"{o}ebe $\frac{1}{4}$-theorem due to Gehring and Astala \cite{AG85}.

Let $(g,\alpha)$ be the data for a normalized almost-Fuchsian disk $\widetilde{\Sigma}\subset \mathbb{H}^3.$  By the uniformization theorem, there exists a conformal diffeomorphism
\begin{align}
\phi: \mathbb{D}\rightarrow (\widetilde{\Sigma},g)
\end{align}
satisfying $f(0)=p.$  Since $\phi$ is conformal,
\begin{align}
\phi^* g=e^{2u}h
\end{align}
for some $u\in C^{\infty}(\Sigma)$ where $h$ is the hyperbolic metric on $\mathbb{D}.$  If $dV_g$ is the volume element arising from the metric $g,$ then
\begin{align}\label{eqn: jacobian}
\phi^{*}dV_g=e^{2u}dV_h.
\end{align}

\textbf{Convention:}  The following \textbf{notation} is in place for the rest of this section:  $\widetilde{\Sigma}\subset \mathbb{H}^3$ is a normalized almost-Fuchsian disk with data $(g,\alpha).$  Recall, this means $(0,0,1)=p\in\widetilde{\Sigma}$ is such that $\alpha(p)=0.$
Choose an $\varepsilon>0,$ then Proposition \ref{prop:bdQD2} yields $r>0$ such that the norm  of $\alpha$ is less than $\varepsilon$ on $B_{g}(p,r).$  Whenever $r$ or $\varepsilon$ appear in statements below, it is these fixed quantities to which we refer.

\begin{proposition}\label{thm: uniform}
Let $\widetilde{\Sigma}$ be a normalized almost-Fuchsian disk with data $(g,\alpha).$  Let
\begin{align}
\phi:\mathbb{D}\rightarrow (\widetilde{\Sigma},g)
\end{align}
be a uniformization such that $\phi(0)=p.$
Then there exists $r_1=r_1(r)>0$ such that
\begin{align}
B_{\mathbb{C}}(0,r_1)\subset \phi^{-1}(B_g(p,r)).
\end{align}
Furthermore,
\begin{align}
J_{\phi}(z)>2
\end{align}
for all $z\in \mathbb{D}.$  Here $J_{\phi}(z)$ is the Jacobian of $\phi$ at $z$ calculated with respect to the Euclidean metric on $\mathbb{D}.$
\end{proposition}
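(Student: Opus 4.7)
The plan is to exploit the uniformization equation $\phi^{*}g = e^{2u}h$ from the conformal parametrization together with Lemma \ref{lem:AFmetriccomp}, which pins down $u$ in the range $-\tfrac{\ln 2}{2} < u \leq 0$. Both conclusions of the proposition flow directly from these two-sided bounds once we compare the three natural metrics on $\mathbb{D}$: the Euclidean metric, the hyperbolic metric $h = \frac{4|dz|^{2}}{(1-|z|^{2})^{2}}$, and the pulled-back metric $\phi^{*}g$.

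For the inclusion $B_{\mathbb{C}}(0,r_{1})\subset \phi^{-1}(B_{g}(p,r))$, the key observation is that $u\leq 0$ forces $\phi^{*}g \leq h$ pointwise as quadratic forms, so path lengths satisfy $L_{\phi^{*}g}(\gamma)\leq L_{h}(\gamma)$ for every path $\gamma$. Since $\phi:(\mathbb{D},\phi^{*}g)\to(\widetilde{\Sigma},g)$ is an isometry by construction, this gives
\begin{equation}
d_{g}(p,\phi(z)) \;=\; d_{\phi^{*}g}(0,z) \;\leq\; d_{h}(0,z),
\end{equation}
so the hyperbolic ball of radius $r$ about $0$ is contained in $\phi^{-1}(B_{g}(p,r))$. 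Passing from hyperbolic to Euclidean radii in the Poincar\'e disk model gives the explicit value $r_{1} = \tanh(r/2)$, which depends only on $r$ as required.

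For the Jacobian bound, the relation $\phi^{*}g = e^{2u}h$ together with \eqref{eqn: jacobian} lets us express the Euclidean Jacobian as
\begin{equation}
J_{\phi}(z) \;=\; e^{2u(z)}\cdot \frac{4}{(1-|z|^{2})^{2}}.
\end{equation}
The lower bound $u > -\tfrac{\ln 2}{2}$ from Lemma \ref{lem:AFmetriccomp} yields $e^{2u} > \tfrac{1}{2}$ strictly, while $(1-|z|^{2})^{2}\leq 1$ on $\mathbb{D}$. Multiplying gives $J_{\phi}(z) > \tfrac{2}{(1-|z|^{2})^{2}} \geq 2$, with strict inequality holding everywhere, including at $z=0$ (there thanks to the strict bound on $u$, away from $0$ also from the factor $(1-|z|^{2})^{-2}$).

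There is no serious obstacle here: both statements are essentially bookkeeping once Lemma \ref{lem:AFmetriccomp} is in hand. The only step requiring any care is ensuring strictness of the Jacobian inequality at the center $z=0$, which is precisely where the strict lower bound $u > -\tfrac{\ln 2}{2}$ (as opposed to a non-strict one) is used.
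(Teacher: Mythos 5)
Your proof is correct and follows essentially the same route as the paper: both conclusions are read off from the conformal factor bounds $-\tfrac{\ln 2}{2} < u \leq 0$ of Lemma \ref{lem:AFmetriccomp}, with the Jacobian estimate $J_{\phi}(z) = 4e^{2u(z)}(1-|z|^{2})^{-2} > 2$ obtained exactly as in the paper. Your treatment of the ball inclusion is in fact slightly sharper than the paper's appeal to uniform comparability of metrics, since the monotonicity of lengths under $\phi^{*}g \leq h$ yields the explicit value $r_{1} = \tanh(r/2)$.
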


\begin{proof}
All of the conclusions are direct consequences of Lemma \ref{lem:AFmetriccomp}.  On a fixed compact subset of $\mathbb{D},$ the metric $g$ is uniformly comparable to the hyperbolic metric and the hyperbolic metric is uniformly comparable to the Euclidean metric, thus there exists $r_1>0$ such that
\begin{align}
B_{\mathbb{C}}(0,r_1)\subset \phi^{-1}(B_g(p,r)).
\end{align}
Writing $\phi^*(g)=e^{2u}h,$ then by \eqref{eqn: jacobian}, the Jacobian of $\phi$ (with respect to the Euclidean metric on $\mathbb{D}$) is given by
\begin{align}
J_{\phi}(z)=\frac{4e^{2u(z)}}{(1-\lvert z\rvert^2)^2}.
\end{align}
Applying the estimate in Lemma \ref{lem:AFmetriccomp} reveals
\begin{align}
J_{\phi}(z)> 2
\end{align}
for all $z\in \mathbb{D}=\mathbb{H}^2.$  This completes the proof.
\end{proof}

The following proposition which controls the distortion of the hyperbolic Gauss map is due to Epstein \cite{EPS86}.
\begin{proposition}\label{thm: jacgauss}
Let $\widetilde{\Sigma}$ be a normalized almost-Fuchsian disk with data $(g,\alpha).$  Then:
\begin{enumerate}
\item The hyperbolic Gauss map
\begin{align}
\mathcal{G}_{\widetilde{\Sigma}}^+:B_g(p,r)\rightarrow \mathbb{C}
\end{align}
is quasiconformal with dilatation $K\leq \left(\frac{1+\varepsilon}{1-\varepsilon}\right)^{\frac{1}{2}}.$
\item There exists a universal constant $C>0$ such that
\begin{align}
J_{\mathcal{G}_{\widetilde{\Sigma}}^{+}}(x)>C(1-\varepsilon^2) \label{eqn: jacJ}
\end{align}
for all $x\in B_g(p,r)$ where the Jacobian of $\mathcal{G}^+$ is computed with respect to the Euclidean metric on $\mathbb{C}.$
\end{enumerate}
\end{proposition}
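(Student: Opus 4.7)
The plan is to apply the explicit formulas of Epstein ~\cite{EPS86} for the differential of the forward hyperbolic Gauss map in terms of the shape operator, specialized to our minimal surface. Since $\widetilde{\Sigma}$ is minimal in $\mathbb{H}^3$, its shape operator $\mathbb{S}$ has eigenvalues $\pm\lambda$ with $\lambda^2 = \lVert \alpha \rVert_g^2$, so Proposition \ref{prop:bdQD2} yields the pointwise bound $|\lambda| < \varepsilon$ on all of $B_g(p,r)$.

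The first step is to write down, from ~\cite{EPS86}, the formula for $d\mathcal{G}^+_q$ at a point $q \in \widetilde{\Sigma}$: up to a conformal linear map $L_q : T_q\widetilde{\Sigma} \to T_{\mathcal{G}^+(q)}\mathbb{C}$ determined by the position of $q$ and the direction of the unit normal in $\mathbb{H}^3$, this differential is given by the endomorphism $\mathbb{I} - \mathbb{S}|_q$ acting on $T_q\widetilde{\Sigma}$. In a $g$-orthonormal basis diagonalizing $\mathbb{S}$, this endomorphism is $\mathrm{diag}(1-\lambda, 1+\lambda)$, with singular values $1 \pm |\lambda|$. Because conformal composition does not alter the dilatation, this bounds the pointwise dilatation $K(q)$ of $d\mathcal{G}^+_q$ by the ratio $\frac{1+|\lambda|}{1-|\lambda|}$, which together with $|\lambda| < \varepsilon$ and Epstein's precise conventions gives the stated bound on $K$. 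The Jacobian of $\mathcal{G}^+$ with respect to the Euclidean metric on $\mathbb{C}$ factors in the same coordinates as $|L_q|^2 \cdot (1 - \lambda^2(q))$, so the Jacobian lower bound reduces to a uniform positive lower bound on $|L_q|^2$ over $B_g(p,r)$.

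The main obstacle is therefore to control $|L_q|^2$ uniformly on $B_g(p,r)$ rather than merely at the central point $p$. Here the normalization is essential: fixing $p = (0,0,1)$ with downward unit normal and $\alpha(p) = 0$ anchors $|L_p|^2$ at a specific positive value, and because every $q \in B_g(p,r)$ lies inside the hyperbolic ball of radius $r$ about $p$ in $\mathbb{H}^3$ (using Lemma \ref{lem:AFmetriccomp} to control $g$ by the hyperbolic metric on $\widetilde{\Sigma}$), the point $q$ stays inside a fixed compact subset of the upper half-space. On this subset $|L_q|^2$ is bounded below by a positive constant $C$ depending only on $r$. Combined with $1 - \lambda^2(q) > 1 - \varepsilon^2$ this yields $J_{\mathcal{G}^+}(q) > C(1 - \varepsilon^2)$, as required.
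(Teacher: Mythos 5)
Your argument is correct in substance but takes a genuinely different route from the paper: the paper's proof of this proposition is a two-line citation to pages 120--121 of \cite{EPS86}, plus the remark that the spherical and Euclidean metrics are uniformly comparable near $0\in\mathbb{C}$, whereas you re-derive Epstein's estimates from the factorization $d\mathcal{G}^{+}_q=L_q\circ(\mathbb{I}+\mathbb{S}_q)$ with $L_q$ conformal. (You wrote $\mathbb{I}-\mathbb{S}$; this is a sign convention and immaterial here since minimality makes the spectrum of $\mathbb{S}$ symmetric. Note the paper's normal-flow formula $\cosh(t)\,\mathbb{I}+\sinh(t)\,\mathbb{S}$ makes $\mathbb{I}+\mathbb{S}$ the natural choice for $\mathcal{G}^{+}$.) Granting that factorization --- which is exactly the content of Epstein's computation --- your two conclusions do follow as you say: the pointwise dilatation is the ratio of the singular values of $\mathbb{I}+\mathbb{S}_q$, controlled by $|\lambda|<\varepsilon$ from Proposition \ref{prop:bdQD2}, and the Jacobian is $|L_q|^2\det(\mathbb{I}+\mathbb{S}_q)=|L_q|^2(1-\lambda^2)$, with $|L_q|^2$ bounded below by a compactness argument since $B_g(p,r)\subset B_{\mathbb{H}^3}(p,r)$ and the Euclidean metric dominates the spherical one. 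Two small corrections: the containment $B_g(p,r)\subset B_{\mathbb{H}^3}(p,r)$ is simply the fact that intrinsic distance on a submanifold dominates ambient distance, not a consequence of Lemma \ref{lem:AFmetriccomp} (which compares $g$ to the uniformizing metric $h$ \emph{on the surface}); and your dilatation bound comes out as $\frac{1+\varepsilon}{1-\varepsilon}$, which is weaker than the stated $\left(\frac{1+\varepsilon}{1-\varepsilon}\right)^{\frac{1}{2}}$, a discrepancy that an appeal to ``Epstein's precise conventions'' does not by itself resolve. Neither point is fatal: everything downstream (Proposition \ref{prop: applykoebe} and Theorem \ref{thm: koebe}) uses only that $K$ is bounded and that the Jacobian lower bound is positive and independent of $(g,\alpha)$, which your argument delivers. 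What your approach buys is a self-contained proof with visible dependence of the constants on $r$ and $\varepsilon$; what the paper's citation buys is brevity and Epstein's sharper form of the constants.
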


\begin{proof}
These facts can be found on pages $120-121$ of \cite{EPS86}.  Epstein uses the spherical metric on $\mathbb{C},$ but in a bounded neighborhood of zero the spherical metric and Euclidean metric are uniformly
comparable.  Thus, the $C$ we obtain in \eqref{eqn: jacJ} is some multiple of that obtained by Epstein.
\end{proof}

Next, we introduce the generalization of K\"{o}ebe's $\frac{1}{4}$-theorem due to Gehring and Astala \cite{AG85}.  Let $U,V\subset \mathbb{C}$ be open domains and
\begin{align}
f:U\rightarrow V
\end{align}
a $K$-quasiconformal mapping with Jacobian $J_f.$  Then $\log J_f$ is locally integrable and the quantity
\begin{align}
(\log J_f)_{B}=\frac{1}{\lvert B \rvert}\int_{B} \log J_f \ dx,
\end{align}
with $B\subset U$ a ball, is well defined.  For each $x\in U,$ define
\begin{align}
B(x)=B_{\mathbb{C}}(x,d_{\mathbb{C}}(x,\partial U))
\end{align}
to be the largest open ball centered at $x$ which remains in $U.$  Lastly, define
\begin{align}
a_{f}(x)=\text{\textnormal{exp}}\left(\frac{1}{2}(\log J_f)_{B(x)}\right).
\end{align}
The promised generalization follows.
\begin{theorem}[\cite{AG85}] \label{thm: koebe}
Suppose $U$ and $V$ are open domains in $\mathbb{C}$ and $f:U\rightarrow V$ is $K$-quasiconformal. Then there exists a constant $C=C(K)$ such that
\begin{align}
\frac{1}{C}\frac{d_{\mathbb{C}}(f(x),\partial V)}{d_{\mathbb{C}}(x,\partial U)}\leq a_{f}(x)\leq C  \frac{d_{\mathbb{C}}(f(x),\partial V)}{d_{\mathbb{C}}(x,\partial U)}.
\end{align}
\end{theorem}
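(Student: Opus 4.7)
I first normalize: by pre- and post-composing with Euclidean similarities, assume $x=0$, $f(0)=0$, and $d_{\mathbb{C}}(0,\partial U)=1$ (so $B(x)=\mathbb{D}$), and write $\delta = d_{\mathbb{C}}(0,\partial V)$. The claim then reduces to showing $C(K)^{-1}\delta \leq a_f(0) \leq C(K)\delta$. In the conformal case $K=1$ this is essentially Koebe's $\tfrac{1}{4}$-theorem packaged differently: $\log J_f = 2\log|f'|$ is harmonic on $\mathbb{D}$, so by the mean-value property
\begin{align}
a_f(0)^{\,2} \;=\; \exp\bigl((\log J_f)_{\mathbb{D}}\bigr) \;=\; |f'(0)|^{2},
\end{align}
and classical Koebe yields $\tfrac{1}{4}\delta \leq |f'(0)| \leq 4\delta$, giving the result with universal constant.

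The quasiconformal generalization proceeds in two upgrades. First, $\log J_f$ is no longer harmonic, but Gehring's higher-integrability theorem says $J_f$ satisfies a reverse H\"older inequality with exponents and constants controlled by $K$; self-improvement places $\log J_f$ in $\mathrm{BMO}(\mathbb{D})$ with norm depending only on $K$. By John--Nirenberg, mean values of $\log J_f$ over balls of comparable scale differ by $O_K(1)$, so one can chain through a family of nested concentric balls to compare $a_f(0)$ with $J_f$-averages on small balls where reverse H\"older applies directly, losing only a multiplicative constant $C(K)$. Second, the conformal Koebe comparison is replaced by a Mori-type qc distortion theorem: applied to $f\colon\mathbb{D}\to f(\mathbb{D})$, it places $f$ of each small concentric subball between Euclidean balls of radii comparable to $\delta/C(K)$ and $C(K)\delta$ around $0$, so the averages of $J_f$ on those subballs are comparable to $\delta^{2}$ up to $K$-dependent constants. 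Stringing these together gives $a_f(0)\asymp_K \delta$.

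The principal difficulty is the chaining step: the reverse H\"older inequality for $J_f$ requires balls $B$ with $2B\subset\mathbb{D}$, yet the statement of the theorem concerns the average over $B(x)=\mathbb{D}$ itself, and naively inflating subballs out to $\partial\mathbb{D}$ can degrade the $K$-dependence. This is precisely where Astala--Gehring bypass BMO and argue instead with moduli of curve families, which are quasi-invariantly bounded by $K$ across the entire domain; by comparing the modulus of a ring centered at $0$ in $\mathbb{D}$ with its $f$-image in $V$ one obtains a direct relation between $d_{\mathbb{C}}(0,\partial U)$, $d_{\mathbb{C}}(f(0),\partial V)$ and the integral of $J_f$ on annular shells, sidestepping the boundary issue entirely and producing the sharp constant $C(K)$.
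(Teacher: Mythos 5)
The paper does not actually prove this statement: Theorem \ref{thm: koebe} is imported verbatim from Astala--Gehring \cite{AG85} and used as a black box, so there is no internal argument to compare yours against. Judged on its own terms, your conformal base case is correct and is the right model computation: after normalization, $a_f(0)=|f'(0)|$ because $\log J_f=2\log|f'|$ is harmonic and the solid mean value property applies, and Koebe plus Schwarz give $\delta\leq|f'(0)|\leq 4\delta$.

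The quasiconformal step, however, is a sketch with a genuine gap, and the gap is not where you locate it. The crux is the single chain of comparisons $\exp\bigl((\log J_f)_{B(x)}\bigr)\asymp_K \frac{1}{|B(x)|}\int_{B(x)}J_f\asymp_K\bigl(d_{\mathbb{C}}(f(x),\partial V)/d_{\mathbb{C}}(x,\partial U)\bigr)^2$ over the \emph{maximal} ball $B(x)$; you gesture at three different mechanisms for it (reverse H\"older plus John--Nirenberg chaining, Mori-type distortion, moduli of ring domains) without carrying any of them out, so the decisive inequality is asserted rather than proved. Two concrete corrections. First, the ``principal difficulty'' you flag --- that inflating subballs out to $\partial\mathbb{D}$ degrades the $K$-dependence --- is illusory: once one has $\lVert\log J_f\rVert_{BMO}\leq c(K)$ (Reimann's theorem), the averages of $\log J_f$ over $B(x)$ and $\tfrac12 B(x)$ differ by at most a fixed multiple of the $BMO$ norm in a \emph{single} doubling step, so no chaining toward the boundary is needed; all remaining work happens on $\tfrac12 B(x)$, where the reverse H\"older/$A_\infty$ inequality applies and where the quasihyperbolic distortion bound $|f(y)-f(x)|\leq C(K)\,d_{\mathbb{C}}(f(x),\partial V)$ holds. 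That last bound genuinely fails on the full maximal ball --- the image $f(B(x))$ need not stay within distance $C\,d_{\mathbb{C}}(f(x),\partial V)$ of $f(x)$ --- which is a second, unaddressed reason your framing ``the average over $B(x)=\mathbb{D}$ itself'' needs repair: the upper bound on $\frac{1}{|B|}\int_B J_f=|f(B)|/|B|$ is not obvious for the maximal ball. Second, your closing claim that Astala--Gehring ``bypass BMO'' and argue purely with curve families is not accurate; their proof runs through Gehring's reverse H\"older inequality and the $A_\infty$/$BMO$ properties of $J_f$, i.e.\ precisely the machinery you set up and then abandon. The outline is salvageable, but as written the key analytic step is missing.
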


Given $\widetilde{\Sigma}\subset \mathbb{H}^3$ a normalized almost-Fuchsian disk with data $(g,\alpha),$ fix a uniformization
\begin{align}
\phi: \mathbb{D}\rightarrow (\widetilde{\Sigma},g)
\end{align}
such that $\phi(0)=p$ as in Proposition \ref{thm: uniform}.  Consider the composition
\begin{align}
\Phi:=\mathcal{G}_{\widetilde{\Sigma}}^{+}\circ \phi :\mathbb{D}\rightarrow \mathbb{C}
\end{align}
where we have identified $\partial_{\infty}(\mathbb{H}^3)$ with $\mathbb{C}\cup \{\infty\}.$  Our strategy is to apply Theorem \ref{thm: koebe} to the function $\Phi.$  We collect the necessary properties of $\Phi$ below.

\begin{proposition}\label{prop: applykoebe}
The map $\Phi:\mathbb{D}\rightarrow \mathbb{C}$ above satisfies:
\begin{enumerate}
\item $\Phi(0)=0$
\item The restriction of $\Phi$ to the sub-disk $B_{\mathbb{C}}(0,r_1)$ from Proposition \ref{thm: uniform} is $K$-quasiconformal with $K$ independent of $(g,\alpha).$
\item There exists $\beta(r_1)=\beta>0$ such that $J_{\Phi}(z)>\beta$ for all $z\in B_{\mathbb{C}}(0,r_1)$ with $\beta$ independent of $(g,\alpha).$
\end{enumerate}
\end{proposition}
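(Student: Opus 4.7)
The plan is to verify the three items in turn, with each essentially a bookkeeping consequence of the normalization of $\widetilde{\Sigma}$ together with Propositions \ref{thm: uniform} and \ref{thm: jacgauss}. All the real work has already been done; the proposition just packages those results in a form suitable for applying the Gehring--Astala theorem.

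For (1), I would directly compute $\Phi(0)=\mathcal{G}_{\widetilde{\Sigma}}^{+}(\phi(0))=\mathcal{G}_{\widetilde{\Sigma}}^{+}(p)$ using the definition of the normalized disk. Since $p=(0,0,1)$ and the oriented unit normal at $p$ is $(0,0,-1)$, the geodesic ray $\gamma_p$ in the upper half-space model is the vertical line $\gamma_p(s)=(0,0,e^{-s})$. This ray is unit-speed (the hyperbolic length of $(0,0,-1)$ at height $1$ is $1$) and converges to $(0,0,0)\in\partial_\infty(\mathbb{H}^3)$, which corresponds to $0\in\mathbb{C}$. Hence $\Phi(0)=0$.

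For (2), I would use the fact that $\phi:\mathbb{D}\to(\widetilde{\Sigma},g)$ is a conformal map from $(\mathbb{D}, h)$ to $(\widetilde{\Sigma},g)$. On $B_{\mathbb{C}}(0,r_1)\subset\phi^{-1}(B_g(p,r))$, the image $\phi(B_{\mathbb{C}}(0,r_1))\subset B_g(p,r)$, where by Proposition \ref{thm: jacgauss} the hyperbolic Gauss map $\mathcal{G}_{\widetilde{\Sigma}}^{+}$ is $K$-quasiconformal with $K\leq ((1+\varepsilon)/(1-\varepsilon))^{1/2}$. Since precomposition by a conformal map preserves quasiconformality and the dilatation constant, the restriction $\Phi|_{B_{\mathbb{C}}(0,r_1)}$ is $K$-quasiconformal with the \emph{same} constant $K$, which depends only on $\varepsilon$ and is therefore independent of the particular data $(g,\alpha)\in\mathcal{AF}$.

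For (3), I would apply the chain rule for Jacobians. With both Jacobians computed relative to the stated Euclidean metrics on source and target, one has
\begin{align}
J_{\Phi}(z)=J_{\mathcal{G}_{\widetilde{\Sigma}}^{+}}(\phi(z))\cdot J_{\phi}(z)
\end{align}
for $z\in\mathbb{D}$, where the intermediate metric $g$ cancels. Proposition \ref{thm: uniform} gives $J_{\phi}(z)>2$ everywhere on $\mathbb{D}$, and Proposition \ref{thm: jacgauss} gives $J_{\mathcal{G}_{\widetilde{\Sigma}}^{+}}(x)>C(1-\varepsilon^2)$ for $x\in B_g(p,r)$. Combining these on $B_{\mathbb{C}}(0,r_1)\subset\phi^{-1}(B_g(p,r))$ yields $J_{\Phi}(z)>2C(1-\varepsilon^2)=:\beta$, with $\beta$ a function of $r_1$ and the fixed $\varepsilon$ alone, hence independent of $(g,\alpha)$. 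The only point requiring care anywhere in the argument is the metric bookkeeping in this chain-rule step --- specifically, checking that the ``intermediate'' $g$-measure really does cancel so that the product of the two Jacobians is genuinely the Euclidean-to-Euclidean Jacobian of $\Phi$ --- but this is routine once one notes that $J_\phi$ is Eucl-to-$g$ and $J_{\mathcal{G}^+}$ is $g$-to-Eucl.
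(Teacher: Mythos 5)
Your proposal is correct and follows the paper's proof essentially verbatim: item (1) from the normalization, item (2) from invariance of the dilatation under precomposition with the conformal map $\phi$, and item (3) from multiplicativity of the Jacobian combined with the lower bounds of Propositions \ref{thm: uniform} and \ref{thm: jacgauss}, yielding the same constant $\beta=2C(1-\varepsilon^2)$. The extra detail you supply (the explicit vertical geodesic for (1) and the metric bookkeeping remark for (3)) is a welcome elaboration but not a different argument.
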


\begin{proof}
First,
\[\Phi(0)=0\]
is a direct consequence of the fact that $\widetilde{\Sigma}$ is normalized.
Next, $\Phi$ is a composition of a conformal map $\phi$ with the hyperbolic Gauss map $\mathcal{G}_{\widetilde{\Sigma}}^{+}.$  Thus, the restriction of $\Phi$ to $B_{\mathbb{C}}(0,r_1)$ is K-quasiconformal if $\mathcal{G}_{\widetilde{\Sigma}}^{+}$ is K-quasiconformal on $B_{g}(p,r).$  This is proved in Proposition \ref{thm: jacgauss} which verifies $(2).$
Lastly, the Jacobian is multiplicative:
\begin{align}
J_{\Phi}(z)=J_{\mathcal{G}_{\widetilde{\Sigma}}^+}\left(\phi(z)\right)J_{\phi}(z).
\end{align}
Combining Propositions \ref{thm: uniform} and \ref{thm: jacgauss}, the product on the right is bounded below by $\beta=2C(1-\varepsilon^2).$ This proves $(3)$ and the proof is complete.

\end{proof}

Proposition \ref{prop: applykoebe} and Theorem \ref{thm: koebe} combine to show that the image of the Gauss map contains disks of a definite size.
\begin{proposition}
\label{prop:thickGaussMap} Let $\widetilde{\Sigma}$ be a normalized almost-Fuchsian disk.  Then there exists $R>0$ such that $B_{\mathbb{C}}(0,R)\subset\mathcal{G}_{\widetilde{\Sigma}}^+(\widetilde{\Sigma})$
where $B_{\mathbb{C}}(0,R)$ is the Euclidean disc of radius $R$ centered at $0\in\mathbb{C}.$
\end{proposition}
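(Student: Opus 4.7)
The plan is to apply the Gehring--Astala generalization of the Koebe $1/4$-theorem (Theorem \ref{thm: koebe}) to the composition $\Phi = \mathcal{G}_{\widetilde{\Sigma}}^+ \circ \phi$ at the origin, using the bounds packaged in Proposition \ref{prop: applykoebe}. Concretely, I would set $U = B_{\mathbb{C}}(0,r_1) \subset \mathbb{D}$ and $V = \Phi(U) \subset \mathbb{C}$. Because $\phi$ extends to a neighborhood of $\overline{U} \subset \mathbb{D}$ and $\mathcal{G}_{\widetilde{\Sigma}}^+$ is continuous, $V$ is a bounded open subset of $\mathbb{C}$, and $\Phi|_U : U \to V$ is a $K$-quasiconformal homeomorphism with $K$ independent of $(g,\alpha)$ by item $(2)$ of Proposition \ref{prop: applykoebe}. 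In particular, $V$ is a proper subdomain of $\mathbb{C}$, so the quantity $d_{\mathbb{C}}(\Phi(0), \partial V)$ is finite and meaningful.

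Next, I would bound $a_{\Phi}(0)$ from below. Since $B(0) = B_{\mathbb{C}}(0, r_1)$ and Proposition \ref{prop: applykoebe}(3) gives $J_{\Phi}(z) > \beta$ uniformly on $B(0)$ with $\beta = 2C(1-\varepsilon^2)$, one has
\begin{equation}
a_{\Phi}(0) = \exp\!\left(\tfrac{1}{2}(\log J_{\Phi})_{B(0)}\right) \geq \exp\!\left(\tfrac{1}{2}\log \beta\right) = \sqrt{\beta}.
\end{equation}
Feeding this into the lower inequality of Theorem \ref{thm: koebe} at $x = 0$, together with $\Phi(0) = 0$ and $d_{\mathbb{C}}(0, \partial U) = r_1$, yields
\begin{equation}
d_{\mathbb{C}}(0, \partial V) \geq \frac{r_1 \sqrt{\beta}}{C(K)},
\end{equation}
where $C(K)$ is the constant in Theorem \ref{thm: koebe}. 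Set $R := r_1 \sqrt{\beta}/C(K)$; this depends only on $r_1$, $\beta$, $K$, and $C$, none of which depend on the specific normalized data $(g,\alpha)$.

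Finally, I would promote the distance estimate to the containment $B_{\mathbb{C}}(0,R) \subset V$ by a short connectedness argument: $V$ is open and $\partial V \cap B_{\mathbb{C}}(0,R) = \emptyset$ by the estimate above, so $B_{\mathbb{C}}(0,R) \subset V \sqcup (\mathbb{C} \setminus \overline{V})$, where both pieces are open. Since $B_{\mathbb{C}}(0,R)$ is connected and contains $0 \in V$, the entire ball lies in $V$. Because $V \subset \mathcal{G}_{\widetilde{\Sigma}}^+(\widetilde{\Sigma})$, this gives the desired conclusion.

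The main obstacle here is conceptual rather than technical: the critical point is that the lower bound $R$ is \emph{uniform} across all normalized almost-Fuchsian disks. Every ingredient (the uniformization estimate of Proposition \ref{thm: uniform}, the Jacobian lower bound of Proposition \ref{thm: jacgauss}, the quasiconformal dilatation bound, and the Gehring--Astala constant) must depend only on $\varepsilon$ (equivalently $r$), which was fixed once and for all before normalization; this is exactly why Proposition \ref{prop: applykoebe} was stated with constants independent of $(g,\alpha)$, and why the earlier work ensuring the Harnack-type proposition \ref{prop:bdQD2} gives a \emph{uniform} radius $r$ was essential. Once this uniformity bookkeeping is in place, the proof itself reduces to the two-line application of Theorem \ref{thm: koebe} described above.
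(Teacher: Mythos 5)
Your proposal is correct and follows essentially the same route as the paper: apply the Gehring--Astala theorem to $\Phi=\mathcal{G}_{\widetilde{\Sigma}}^{+}\circ\phi$ on $U=B_{\mathbb{C}}(0,r_1)$, bound $a_{\Phi}(0)\geq\sqrt{\beta}$ via Proposition \ref{prop: applykoebe}(3), and extract $R=r_1\sqrt{\beta}/C(K)$ from the upper inequality of Theorem \ref{thm: koebe}. The only additions beyond the paper's argument are the explicit connectedness step converting the boundary-distance estimate into ball containment and the remarks on uniformity of the constants, both of which are correct and left implicit in the paper.
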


\begin{proof}
Consider the map
\begin{align}
\Phi:=\mathcal{G}_{\widetilde{\Sigma}}^+ \circ \phi : B_{\mathbb{C}}(0,r_1)\rightarrow \mathbb{C}.
\end{align}
By Proposition \ref{prop: applykoebe}$(3),$ there exists $\beta>0$ such that
\begin{align}
a_{\Phi}(0)\geq \text{\textnormal{exp}}\left(\frac{1}{2}\log \beta\right).
\end{align}
Noting that $\Phi$ is $K$-quasiconformal by Proposition \ref{prop: applykoebe}$(2),$ Theorem \ref{thm: koebe} yields
\begin{align}
\sqrt{\beta}\leq \frac{C}{r_1}d_{\mathbb{C}}\bigg(0,\partial \Big(\Phi\big(B_{\mathbb{C}}(0,r_1)\big)\Big)\bigg).
\end{align}
where we have taken $U=B_{\mathbb{C}}(0,r_1)$ and $V=\Phi(U).$
Taking $R=\frac{r_1\sqrt{\beta}}{C}$ completes the proof.
\end{proof}

\section{Main results}\label{sec: main}

In this section, we present the main results concerning the domain of discontinuity of an almost-Fuchsian group.  An almost-Fuchsian group $\Gamma$ is normalized if the $\Gamma$-invariant almost-Fuchsian disk is normalized.

\begin{theorem}
\label{thm:thickDD}
Let $\Gamma$ be a normalized almost-Fuchsian group.  Then the domain of discontinuity $\Omega$ of $\Gamma$ contains $B_{\mathbb{C}}(0,R)\subset \mathbb{C}$ for some $R>0.$
\end{theorem}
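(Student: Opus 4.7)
The plan is to simply combine the two prior main results: Epstein's theorem identifying the image of the hyperbolic Gauss map with a component of $\Omega(\Gamma)$, and Proposition \ref{prop:thickGaussMap} giving a Euclidean ball of definite radius inside that image. The proof itself should be essentially one line once the setup is in place.

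First I would let $\widetilde{\Sigma}$ be the normalized $\Gamma$-invariant almost-Fuchsian disk with data $(g,\alpha)$, so by definition $p=(0,0,1)\in\widetilde{\Sigma}$, the normal at $p$ is $(0,0,-1)$, and $\alpha(p)=0$. Since $\Gamma$ is almost-Fuchsian we have $\|\alpha\|_{g}^{2}<1$ pointwise on the compact quotient surface $\Sigma=\widetilde{\Sigma}/\Gamma$, so by compactness there exists $\varepsilon_0>0$ with principal curvatures contained in $(-1+\varepsilon_0,\,1-\varepsilon_0)$ everywhere. This is precisely the hypothesis of Theorem \ref{thm:eps}, and consequently $\mathcal{G}^{+}_{\widetilde{\Sigma}}:\widetilde{\Sigma}\to\partial_{\infty}(\mathbb{H}^3)$ is a quasiconformal diffeomorphism onto one of the two components of $\partial_{\infty}(\mathbb{H}^3)\setminus\partial_{\infty}(\widetilde{\Sigma})$. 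Since $\partial_{\infty}(\widetilde{\Sigma})$ is identified homeomorphically with $\Lambda(\Gamma)$ (as noted at the end of the preliminaries), this component is one of the connected components of $\Omega(\Gamma)$. Hence $\mathcal{G}^{+}_{\widetilde{\Sigma}}(\widetilde{\Sigma})\subset \Omega(\Gamma)$.

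Next I would invoke Proposition \ref{prop:thickGaussMap} directly: because $\widetilde{\Sigma}$ is normalized, there exists $R>0$ with
\[
B_{\mathbb{C}}(0,R)\subset \mathcal{G}^{+}_{\widetilde{\Sigma}}(\widetilde{\Sigma}).
\]
Chaining the two inclusions yields $B_{\mathbb{C}}(0,R)\subset \Omega(\Gamma)$, which is the claim.

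There is no real obstacle at this stage; all the work is upstream. The only thing worth double-checking is that the $\varepsilon$ fixed by the convention preceding Proposition \ref{thm: uniform} can indeed be chosen for the given normalized $\Gamma$: this is fine because $\alpha$ has a zero at the normalization point $p$ and the global bound $\|\alpha\|_{g}<1$ puts us squarely in the hypotheses of Proposition \ref{prop:bdQD2}, which then feeds into Propositions \ref{thm: uniform}--\ref{prop: applykoebe} and thence into the conclusion of \ref{prop:thickGaussMap}. I would note explicitly that the resulting $R$ depends only on the universal constants from Epstein's distortion estimate, the Gehring--Astala theorem, and the chosen $\varepsilon$; this uniformity will be important for the subsequent application to Theorem \ref{thm: ddlimits}, but plays no role in the present statement.
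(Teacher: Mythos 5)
Your proposal is correct and follows exactly the paper's own argument: cite Theorem \ref{thm:eps} to identify $\mathcal{G}^{+}_{\widetilde{\Sigma}}(\widetilde{\Sigma})$ with a component of $\Omega(\Gamma)$, then apply Proposition \ref{prop:thickGaussMap} to place $B_{\mathbb{C}}(0,R)$ inside that image. The extra remarks on the compactness origin of $\varepsilon_0$ and the uniformity of $R$ are accurate but not needed beyond what the paper records.
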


\begin{proof}
Let $\widetilde{\Sigma}$ be the normalized $\Gamma$-invariant almost-Fuchsian disk.  By Theorem \ref{thm:eps}, the forward hyperbolic Gauss map $\mathcal{G}_{\widetilde{\Sigma}}^+:\widetilde{\Sigma}\rightarrow \partial_{\infty}(\mathbb{H}^3)$
is a (quasi-conformal) diffeomorphism onto one connected component $\Omega^{+}$ of the domain of discontinuity.  By Proposition \ref{prop:thickGaussMap}, $B_{\mathbb{C}}(0,R)\subset\mathcal{G}_{\widetilde{\Sigma}}^+(\widetilde{\Sigma})=\Omega^{+}$ for some $R>0.$
\end{proof}

\begin{figure}[h]
  \centering
    \includegraphics[width=3in]{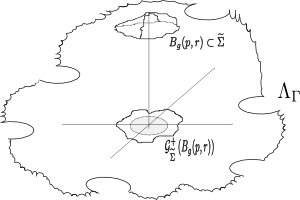}
  \caption{The disk contained inside the image of the hyperbolic Gauss map lying in the domain of discontinuity 
for some almost-Fuchsian group $\Gamma$ with limit set $\Lambda_{\Gamma}.$}
\end{figure}

Note that given an almost-Fuchsian group $\Gamma,$ conjugating $\Gamma$ by a rotation so that $0\in\Omega$ followed by a translation with arbitrarily large translation distance in the direction of the positive $t$-axis, we obtain an almost-Fuchsian group whose domain of discontinuity contains a disk around zero of arbitrarily large radius.  Of course, the resulting almost-Fuchsian group is not normalized.  This is the essential point of the above theorem.

We can actually do better than the previous theorem.  Because the almost-Fuchsian disk $\widetilde{\Sigma}$ is the universal cover of a closed minimal surface whose non-negative principal curvature equals the norm of a holomorphic quadratic differential, the principal curvatures of $\widetilde{\Sigma}$ are zero at a countably infinite set of points.  Thus, the arguments given above can be applied one by one to each such point.  Furthermore, we may use the opposite pointing normal to obtain the same results for the opposing domain of discontinuity.  With this in mind, we can refine the above theorem in the following way.

\begin{theorem}\label{thm:flats}
Let $\Gamma$ be a normalized almost-Fuchsian group and $\widetilde{\Sigma}$ the almost-Fuchsian $\Gamma$-invariant disk with data $(g,\alpha)\in\mathcal{AF}.$  For a fixed compact set $E\subset \mathbb{H}^3$ containing $p=(0,0,1),$ consider any set of points $\{p_i\}\subset\widetilde{\Sigma}$ contained in $E\cap\widetilde{\Sigma}$ such that $\lVert \alpha \rVert_{g}(p_i)=0$ for each $i.$  Then there exists an $R'(E)>0$ such that $\displaystyle\bigcup_{i} B_{\mathbb{S}^2}\left(\mathcal{G}_{\widetilde{\Sigma}}^{\pm}(p_i),R'\right)\subset \Omega$ where $\Omega$ is the domain of discontinuity of $\Gamma.$
\end{theorem}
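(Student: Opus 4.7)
The plan is to reduce the theorem pointwise to Theorem~\ref{thm:thickDD}: conjugate $\Gamma$ so that each $p_i$ is in normalized position, apply the earlier theorem to the conjugate group, and then pull the resulting disk back while using the compactness of $E$ to obtain a uniform spherical radius.

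For each $p_i \in E\cap\widetilde{\Sigma}$ with $\alpha(p_i)=0$, I choose $I_i^{\pm} \in \mathrm{Isom}(\mathbb{H}^3)$ that carries $p_i$ to $p=(0,0,1)$ and sends the appropriately signed unit normal of $\widetilde{\Sigma}$ at $p_i$ to $(0,0,-1)$. The conjugate $\Gamma_i^{\pm} := I_i^{\pm}\, \Gamma\, (I_i^{\pm})^{-1}$ is then a normalized almost-Fuchsian group, the $-$ case being handled by flipping the orientation of the normal (the symmetric statement highlighted by the author in the paragraph preceding the theorem). Theorem~\ref{thm:thickDD} then yields
\[
B_{\mathbb{C}}(0,R) \subset \Omega(\Gamma_i^{\pm}),
\]
where inspection of the proof of that theorem (tracking the estimates in Propositions~\ref{prop:bdQD2},~\ref{thm: uniform}, and~\ref{prop: applykoebe}) shows that the radius $R>0$ depends only on the fixed threshold $\varepsilon$ chosen at the outset and is therefore \emph{uniform} across all almost-Fuchsian groups. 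Since Euclidean and spherical metrics are uniformly comparable on any bounded Euclidean neighborhood of $0 \in \mathbb{C}$, there exists a universal $R''>0$ with $B_{\mathbb{S}^2}(0,R'')\subset B_{\mathbb{C}}(0,R)$.

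Next I transfer back to $\Omega(\Gamma)$ via $(I_i^{\pm})^{-1}$. By construction $(I_i^{\pm})^{-1}(0) = \mathcal{G}_{\widetilde{\Sigma}}^{\pm}(p_i)$, and $(I_i^{\pm})^{-1}\bigl(\Omega(\Gamma_i^{\pm})\bigr) = \Omega(\Gamma)$. The crucial uniform estimate is that the hyperbolic displacement of each $I_i^{\pm}$ is controlled by $E$, since
\[
d_{\mathbb{H}^3}\bigl(p,\, I_i^{\pm}(p)\bigr) = d_{\mathbb{H}^3}\bigl(I_i^{\pm}(p_i),\, I_i^{\pm}(p)\bigr) = d_{\mathbb{H}^3}(p_i,p) \leq \mathrm{diam}_{\mathbb{H}^3}(E),
\]
and because the remaining ambiguity in choosing $I_i^{\pm}$ lies in the compact stabilizer $U(1)$, the family $\{I_i^{\pm}\}$ is contained in a compact subset of $\mathrm{Isom}(\mathbb{H}^3)$ depending only on $E$. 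The action of $\mathrm{Isom}(\mathbb{H}^3)$ on $(\partial_{\infty}(\mathbb{H}^3), d_{\mathbb{S}^2})$ is continuous, so such a compact subfamily acts with uniformly bounded bi-Lipschitz constants on the sphere. This gives $R'=R'(E)>0$ with
\[
B_{\mathbb{S}^2}\bigl(\mathcal{G}_{\widetilde{\Sigma}}^{\pm}(p_i),\, R'\bigr) \subset (I_i^{\pm})^{-1}\bigl(B_{\mathbb{S}^2}(0,R'')\bigr) \subset \Omega(\Gamma),
\]
and unioning over $i$ and over the two signs concludes the proof.

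The principal technical point is the uniform bi-Lipschitz estimate of the previous paragraph — the very subtlety the author already flags in the introduction in connection with the proof of Theorem~\ref{thm: ddlimits}. Since $\mathrm{Isom}(\mathbb{H}^3)$ does not act by spherical isometries on $\partial_{\infty}(\mathbb{H}^3)$, without the hypothesis that $p_i\in E$ one could imagine the conjugating isometries $I_i^{\pm}$ leaving every compact set and crushing the fixed-radius Euclidean disk into arbitrarily thin spherical neighborhoods of $\mathcal{G}_{\widetilde{\Sigma}}^{\pm}(p_i)$. The compactness of $E$ is exactly what rules this out in the current theorem; the analogous control in Theorem~\ref{thm: ddlimits}, where no such compact set is available a priori, must come from the geometry of the minimal surface itself.
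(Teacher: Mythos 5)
Your proof is correct and follows essentially the same route as the paper: conjugate each zero $p_i$ into normalized position, apply Theorem \ref{thm:thickDD}, and use the compactness of $E$ to confine the normalizing isometries to a compact subset of $\mathrm{Isom}(\mathbb{H}^3)$, whence a uniform bound on their spherical distortion. Your version is in fact slightly cleaner than the paper's, which additionally counts the zeros in $E\cap\widetilde{\Sigma}$ and takes a minimum of spherical radii over a finite list of words in the $I_i$ --- a step your uniform bi-Lipschitz argument for the compact family renders unnecessary.
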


\textbf{Remark:} By Theorem \ref{thm:eps}, $\widetilde{\Sigma}$ is properly embedded so that $E\cap \widetilde{\Sigma}$ is compact. As any set of zeros $\{p_i\}$ is necessarily discrete (they are zeros of a holomorphic quadratic differential), any compact set of zeros is finite.

\begin{proof}
Firstly, we claim there exists a positive integer $K(E)$ such that the cardinality of the set of zeros of $\alpha$ contained in $\widetilde{\Sigma}\cap E$ is always less than $K.$  As observed in the remark above, $\widetilde{\Sigma}\cap E$ is compact.  Because the metric on the minimal surface is uniformly comparable to the hyperbolic metric, there exists a uniform constant $C>0$ such that the area of $\widetilde{\Sigma}\cap E$ is bounded above by $C.$  By the Gauss equation the area of the closed minimal surface which $\widetilde{\Sigma}$ covers is at least $\pi(2g-2)$ where $g$ is the genus, thus a fundamental domain for the action of the almost-Fuchsian group has area at least $\pi(2g-2).$  Then there are at most $\frac{C}{\pi(2g-2)}$ fundamental domains which lie in $\widetilde{\Sigma}\cap E.$  Since each fundamental domain contains at most $4g-4$ zeros of $\alpha,$ we take $K(E)$ equal to the nearest integer greater than $(4g-4)\frac{C}{\pi(2g-2)}=\frac{2C}{\pi}.$

For any set of zeros $\{p_i\}_{i=1}^{n}\subset \widetilde{\Sigma}\cap E,$ each member of a collection of normalizing elements $\{I_i\}_{i=1}^{n}\subset \mathrm{Isom}(\mathbb{H}^3)$ (i.e. $I_i$ satisfies $I_i(p_i)=p=(0,0,1)\big)$ lies in a fixed compact set in $\text{Isom}(\mathbb{H}^3).$  This follows from the fact that each $I_i$ will be a composition of a hyperbolic element of uniformly bounded translation distance (specifically bounded by the distance from $p$ to $\partial E$) and a rotation. Fixing the $R>0$ of Theorem \ref{thm:thickDD}, we set $R'$ equal to the minimum of the spherical radii of the disks $I\left(B_{\mathbb{C}}(0,R)\right)$ where $I\in\text{Isom}(\mathbb{H}^3)$ ranges over the finite set of all words of length at most $n$ in the $I_i$ and their inverses.  Remember that we first established that there exists a universal constant such that $n<K(E)$ and so this is a bounded list over all almost-Fuchsian disks.  Each $I_i$ is a conformal transformation so indeed $I(B_{\mathbb{C}}(0,R))$ is a disk.  Then, the argument in Theorem \ref{thm:thickDD} implies that
$\displaystyle\bigcup_{i} B_{\mathbb{S}^2}\left(\mathcal{G}_{\widetilde{\Sigma}}^{+}(p_i),R'\right)\subset \Omega.$

Repeating the same argument with the opposite pointing normal proves that there exists $R''>0$ such that $\displaystyle\bigcup_{i} B_{\mathbb{S}^2}\left(\mathcal{G}_{\widetilde{\Sigma}}^{-}(p_i),R''\right)\subset \Omega.$  Letting $R'=\min\{R',R''\}$ completes the proof.
\end{proof}

\subsection{Geometric limits of almost-Fuchsian groups}\label{subsec: limits}

In this section, we prove that a geometrically convergent sequence of almost-Fuchsian groups can not limit to a doubly-degenerate group.  The mantra is that the disks guaranteed to exist in each domain of discontinuity form barriers into which the limit set can not penetrate.

\begin{theorem}
Suppose $\Gamma_n$ is a sequence of almost-Fuchsian groups and $\Gamma_n\rightarrow \Gamma$ geometrically.  Then $\Gamma$ is not doubly-degenerate.
\end{theorem}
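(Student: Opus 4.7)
The plan is to argue by contradiction: assume $\Gamma$ is doubly-degenerate, so $\Lambda(\Gamma) = \partial_{\infty}(\mathbb{H}^3)$. The strategy is to use Theorem \ref{thm:thickDD} to exhibit, uniformly in $n$, a Euclidean ball of definite radius inside some $\Omega(\Gamma_n')$ where $\Gamma_n'$ is a conjugate of $\Gamma_n$ by a \emph{uniformly bounded} isometry, and then to pass to the limit using the Hausdorff-topology semicontinuity $\Lambda(\Gamma)\subset \lim_n \Lambda(\Gamma_n)$ recalled in the preliminaries. This will furnish a nonempty open set in the domain of discontinuity of an appropriate conjugate of $\Gamma$, contradicting $\Omega(\Gamma)=\emptyset$.

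Concretely, for each $n$ I would let $\widetilde{\Sigma}_n$ be the $\Gamma_n$-invariant almost-Fuchsian disk with data $(g_n,\alpha_n)$, pick a zero $p_n\in\widetilde{\Sigma}_n$ of $\alpha_n$, and choose a normalizing isometry $I_n\in\mathrm{Isom}(\mathbb{H}^3)$ sending $p_n$ to $(0,0,1)$ with oriented normal $(0,0,-1)$. The conjugate $\Gamma_n':=I_n\Gamma_nI_n^{-1}$ is then a normalized almost-Fuchsian group, and Theorem \ref{thm:thickDD} supplies a universal $R>0$ with $B_{\mathbb{C}}(0,R)\subset\Omega(\Gamma_n')$ for every $n$. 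Granted that the $\{I_n\}$ can be chosen to lie in a compact subset of $\mathrm{Isom}(\mathbb{H}^3)$, pass to a subsequence $I_n\to I_\infty$; then $\Gamma_n'\to I_\infty\Gamma I_\infty^{-1}=:\Gamma'$ geometrically, and since every $\Lambda(\Gamma_n')$ avoids the open ball $B_{\mathbb{C}}(0,R)$ so does the Hausdorff limit, hence $\Lambda(\Gamma')$. Therefore $B_{\mathbb{C}}(0,R)\subset \Omega(\Gamma') = I_\infty(\Omega(\Gamma))$, forcing $\Omega(\Gamma)\neq\emptyset$, a contradiction.

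The hard part, flagged explicitly in the introduction, is establishing compactness of the normalizing sequence $\{I_n\}$: because the action of $\mathrm{Isom}(\mathbb{H}^3)$ on $\partial_{\infty}(\mathbb{H}^3)$ is not isometric for the spherical metric, allowing $I_n$ to escape to infinity would cause $I_n^{-1}(B_{\mathbb{C}}(0,R))$—the disk actually sitting in the original $\Omega(\Gamma_n)$—to collapse in spherical size and void any Hausdorff-limit comparison. The translation distance of $I_n$ is controlled, after adjusting by a suitable element of $\Gamma_n$, by the distance in $M_n=\mathbb{H}^3/\Gamma_n$ from the base frame to the finite set of zeros of $\alpha_n$ on the closed minimal surface $\Sigma_n$. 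I would bound this uniformly by combining: (i) the Gauss--Bonnet bound $\mathrm{Area}(\Sigma_n)\leq 2\pi(2g-2)$ coming from $K_{g_n}\in[-2,-1]$; (ii) the uniform comparability of $g_n$ with the hyperbolic metric in its conformal class from Lemma \ref{lem:AFmetriccomp}; (iii) the product structure $M_n\cong\Sigma_n\times\mathbb{R}$ from the normal exponential map of Theorem \ref{thm:UHAF}; and (iv) the uniform lower bound on injectivity radius at the base frame provided by base-framed Gromov--Hausdorff convergence $M_n\to M$. A thick-thin analysis on the hyperbolic uniformization of $\Sigma_n$ should then locate a zero of $\alpha_n$ within a uniform tubular neighborhood of the base frame in $M_n$, producing the required bounded $\{I_n\}$ and completing the argument.
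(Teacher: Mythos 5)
Your first half (the case of a bounded normalizing sequence) is essentially the paper's argument: normalize, invoke Theorem \ref{thm:thickDD} with its universal $R>0$, pass to a convergent subsequence $I_n\to I_\infty$, and use Hausdorff semicontinuity of limit sets to contradict $\Lambda(\Gamma)=\partial_\infty(\mathbb{H}^3)$. The gap is in the second half. You assert that a bounded normalizing sequence always exists and propose to prove it by locating a zero of $\alpha_n$ within a uniform neighborhood of the base frame of $M_n$. None of the ingredients you list --- the Gauss--Bonnet area bound, the comparability $g_n\sim h_n$, the product structure $M_n\cong\Sigma\times\mathbb{R}$, or the injectivity radius bound at the base frame --- controls the \emph{extrinsic} position of the minimal surface relative to the base point. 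A priori the entire invariant disk $\widetilde{\Sigma}_n$ (not merely its set of zeros) may leave every compact subset of $\mathbb{H}^3$; equivalently, the distance in $M_n$ from the base frame to the minimal surface may diverge, since the normal coordinate $t$ of the base point is unconstrained and the convex core of an almost-Fuchsian manifold can be arbitrarily wide as $\max\lambda\to 1$. In that scenario there is no zero of $\alpha_n$, indeed no point of $\Sigma_n$ at all, near the base frame, and your thick--thin analysis has nothing to find. (Even when the surface does pass nearby, locating a zero of $\alpha_n$ at bounded distance is delicate, since the intrinsic diameter of $\Sigma_n$ is not uniformly bounded; but the extrinsic drift is the essential obstruction.)

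The paper does not prove boundedness of the normalizing sequence at all; it runs a dichotomy. If no normalizing sequence is bounded, then every zero of $\alpha_n$ escapes to $\partial_\infty(\mathbb{H}^3)$; after conjugating by uniformly bounded parabolic and elliptic elements one may take the normalizing isometries to be dilations $I_n=\mathrm{diag}(\lambda_n,\lambda_n^{-1})$ with $\lambda_n\to\infty$. Theorem \ref{thm:flats} applied to the \emph{backward} Gauss map gives a definite spherical ball $B_{\mathbb{S}^2}(\infty,R')$ avoided by $\Lambda(I_n\Gamma_nI_n^{-1})$; pulling back by $I_n^{-1}$, this avoided region expands to cover all of $\partial_\infty(\mathbb{H}^3)$ except a small ball about $0$, so $\Lambda(\Gamma_n)\subset B_{\mathbb{S}^2}(0,R')$ for large $n$, contradicting $\Lambda(\Gamma_n)\to\partial_\infty(\mathbb{H}^3)$. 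This ``zoom'' argument is the missing idea: it converts the failure of boundedness into its own contradiction, rather than ruling it out directly. To complete your proof you would either need to supply this second branch or give a genuine proof that the minimal surface (and a zero of $\alpha_n$ on it) stays at bounded distance from the base frame, which your sketch does not do.
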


\begin{proof}
Recall that the limit set of a doubly-degenerate group $\Gamma$ is equal to $\partial_{\infty}(\mathbb{H}^3).$  Suppose $\Gamma_n$ is a sequence of almost-Fuchsian groups converging geometrically to $\Gamma.$  By theorem \ref{thm:thickDD} there exists an $R>0$ and a sequence of transformations $I_n\in \mathrm{Isom}(\mathbb{H}^3)$ such that 
\begin{align}\label{eqn:avoid}
\Lambda(I_n \Gamma_n I_n^{-1})\cap B_{\mathbb{C}}(0,R) =\emptyset.
\end{align}
We shall refer to any such sequence $I_n$ as a normalizing sequence.

First suppose that the $I_n$ remain in a compact subset of $\mathrm{Isom}(\mathbb{H}^3).$  Choose a subsequence such that $I_n\rightarrow I$ where we have relabeled the indices.  Then  
$I_n \Gamma_n I_n^{-1}\rightarrow I \Gamma I^{-1}$ geometrically.  Since $\Lambda(I \Gamma I^{-1})=I \Lambda(\Gamma),$ the limit set of $I \Gamma I^{-1}$ is also equal to $\partial_{\infty}(\mathbb{H}^3).$  We conclude that 
\[\partial_{\infty}(\mathbb{H}^3)=\Lambda(I \Gamma I^{-1})\subset\lim\limits_{n\rightarrow\infty}\Lambda(I_n \Gamma_n I_n^{-1})\] 
and so 
\[\lim\limits_{n\rightarrow\infty}\Lambda(I_n \Gamma_nI_n^{-1})=\partial_{\infty}(\mathbb{H}^3)\] 
in the Hausdorff topology on closed subsets of $\partial_{\infty}(\mathbb{H}^3).$  In particular, since the spherical and Euclidean metrics are comparable on compact subsets in the Euclidean topology (they nearly agree around $0\in\mathbb{C}),$ given any $R>0$ there exists an $N\in\mathbb{N}$ such that 
\[\Lambda(I_n \Gamma_n I_n^{-1})\cap B_{\mathbb{C}}(0,R)\not=\emptyset\]
for every $n\geq N.$  This contradicts equation \eqref{eqn:avoid}.

Now suppose that no normalizing sequence $I_n$ remains in a compact subset of $\mathrm{Isom}(\mathbb{H}^3).$  We work towards a contradiction.  Let $\widetilde{\Sigma_n}$ be the unique $\Gamma_n$-invariant almost-Fuchsian disk.  We select a sequence of points $p_n\in\widetilde{\Sigma_n}$ satisfying the following properties,
\begin{enumerate}
\item Each $p_n\in \widetilde{\Sigma_n}$ is such that the principal curvatures of $\widetilde{\Sigma_n}$ vanish at $p_n.$ 
\item There exists $x\in\mathbb{C}\subset\partial_{\infty}(\mathbb{H}^3)$ such that, after perhaps passing to a subsequence, $p_n\rightarrow x$ where the convergence is in the Euclidean topology on the closed upper half-space $\mathbb{H}^3\cup\partial_{\infty}(\mathbb{H}^3).$  \label{prop:bddpara}
\end{enumerate}
A (sub-)sequence can be found satisfying both conditions since we have assumed every sequence $p_n$ of vanishing principal curvature leaves every compact subset of $\mathbb{H}^3.$  Next, conjugate each group $\Gamma_n$ by a parabolic isometry, represented by a
M\"{o}bius transformation
\[J_n=z+a_n,\]  
which maps $p_n$ to some point $(0,0,p_n)\in\mathbb{H}^3$ where we have abused notation in calling the z-coordinate by the same name.  Note that assumption \eqref{prop:bddpara} on the sequence $p_n$ implies $\lvert a_n \rvert \leq K$ for some $K>0.$  As the next step, conjugate each group further by an elliptic isometry to make the downward pointing unit normal vector to $J_n(p_n)$ directed at $0\in\partial_{\infty}(\mathbb{H}^3).$  Further abusing notation, we label the new sequence of groups as $\Gamma_n$ and denote the unique $\Gamma_n$-invariant almost-Fuchsian disks by $\widetilde{\Sigma_n}.$  

It remains true that $\Lambda(\Gamma_n)\rightarrow \partial_{\infty}(\mathbb{H}^3)$ since the parabolic elements were chosen from a compact set of isometries and elliptic elements act as isometries of the spherical metric from which the Hausdorff topology was induced.  

Given the above prerequisites, a $\widetilde{\Sigma_n}$-normalizing sequence $I_n$ takes the form
\begin{align}
I_n =
 \begin{pmatrix}
  \lambda_n & 0 \\
  0 & \lambda_n^{-1}
 \end{pmatrix}
\in \mathrm{PSL}(2,\mathbb{R})\subset \mathrm{Isom}^{+}(\mathbb{H}^3)
\end{align}
for some $\lambda_n \rightarrow \infty.$

Since $I_n(\widetilde{\Sigma_n})$ is a sequence of normalized almost-Fuchsian disks, by Theorem \ref{thm:flats} there exists an $R'>0$ such that 
\begin{align}\label{eqn:view}
\Lambda(I_n\Gamma_n I_n^{-1})\cap  B_{\mathbb{S}^2}(\infty,R')=\emptyset.
\end{align}
Since $\lambda_n\rightarrow \infty,$ there exists an $N\in\mathbb{N}$ such that 
\begin{align}\label{eqn:roomy}
\partial_{\infty}(\mathbb{H}^3)\setminus I_n^{-1}(B_{\mathbb{S}^2}(\infty,R'))&=(\mathbb{C}\cup\{\infty\})\setminus I_n^{-1}(B_{\mathbb{S}^2}(\infty,R')) \\
&\subset B_{\mathbb{S}^2}(0,R').
\end{align}
for all $n>N.$
Applying $I_n^{-1}$ to equation \eqref{eqn:view} we obtain that,
\begin{align}
\Lambda(\Gamma_n)\cap I_n^{-1} B_{\mathbb{S}^2}(\infty,R')=\emptyset.
\end{align}
Note that the above is true because $I_n^{-1}$ is injective, so it maps non-intersecting sets to non-intersectings sets.
By line \eqref{eqn:roomy} this implies that for all $n>N$
\begin{align}
\Lambda(\Gamma_n)\subset B_{\mathbb{S}^2}(0,R'),
\end{align}
but this is impossible since $\Lambda(\Gamma_n)\rightarrow \partial_{\infty}(\mathbb{H}^3).$  This contradiction implies that we may always find a normalizing sequence which remains in a compact subset of $\mathrm{Isom}(\mathbb{H}^3).$  But, we have already shown above that in this case, hence $\Gamma_n$ can not geometrically limit to a doubly-degenerate group.  This completes the proof.
\end{proof}

\bibliography{DDBib}{}
\bibliographystyle{alpha}

\end{document}